\newcommand{\cal}[1]{\mathcal{#1}}
\theoremstyle{plain}
\newtheorem{theo}{Theorem}
\newtheorem{lemma}{Lemma}[section]
\newtheorem{proposition}[lemma]{Proposition}
\newtheorem{corollary}[lemma]{Corollary}
\theoremstyle{definition}
\newtheorem{definition}[lemma]{Definition}
\newtheorem{remark}[lemma]{Remark}
\let\egthree=\phi
\let\phi=\varphi
\let\varphi=\egthree
\begin{document}
\title{Spotted disk and sphere graphs I}
\author{Ursula Hamenst\"adt}
\thanks{Partially supported by ERC Grant ``Moduli''\\
AMS subject classification:57M99\\
Keywords: Disk graphs, handlebodies with spots, embedded flats,
sphere graphs}
\date{March 18, 2021}


\begin{abstract}
The disk graph of 
a handlebody $H$ of genus $g\geq 2$
with $m\geq 0$ marked points on the 
boundary is the graph whose vertices are isotopy classes of disks
disjoint from the marked points 
and where two vertices are connected by 
an edge of length one if they can be realized disjointly.
We show that for $m=1$ the disk graph contains
quasi-isometrically embedded copies of $\mathbb{R}^2$.
The same holds true for sphere graphs of the
doubled handlebody with one marked points provided that $g$ is even.
\end{abstract}

\maketitle


 \section{Introduction}

The \emph{curve graph} ${\cal C\cal G}$
of an oriented surface $S$ of genus $g\geq 0$
with $m\geq 0$ punctures and
$3g-3+m\geq 2$ is the graph whose vertices 
are isotopy classes of 
essential (that is, non-contractible and not homotopic 
into a puncture) simple
closed curves on $S$. Two such 
curves are connected by an edge of length one if and only
if they can be realized disjointly. 
The curve graph 
is a locally infinite hyperbolic geodesic metric space of 
infinite diameter \cite{MM99}.

A handlebody of genus $g\geq 1$  
is a compact three-dimensional manifold $H$  which can
be realized as a closed regular neighborhood in $\mathbb{R}^3$
of an embedded bouquet of $g$ circles. Its boundary
$\partial H$ is an oriented surface of genus $g$. We allow
that $\partial H$ is equipped with $m\geq 0$ marked points
(punctures) which we call \emph{spots} in the sequel.
The group ${\rm Map}(H)$ 
of all isotopy classes of orientation preserving
homeomorphisms of $H$ which fix each of the spots
is called the \emph{handlebody group} of $H$. 
The restriction of an element of ${\rm Map}(H)$ to the 
boundary $\partial H$ defines an embedding of 
${\rm Map}(H)$ into the mapping class group of $\partial H$, viewed as
a surface with punctures \cite{S77,Wa98}. 

An \emph{essential disk} in $H$ is a properly embedded
disk $(D,\partial D)\subset (H,\partial H)$ whose
boundary $\partial D$ 
is an essential simple closed curve in $\partial H$, viewed as a surface with 
punctures. An isotopy of 
such a disk is supposed to consist of such disks.

The \emph{disk graph} ${\cal D\cal G}$
of $H$ is the graph whose vertices
are isotopy classes of essential disks in $H$. 
Two such disks are connected by an edge 
of length one if and only if
they can be realized disjointly.


In \cite{MS10,H19a,H16} the following is shown.

\begin{theo}\label{disk}
The disk graph of a handlebody of genus $g\geq 2$ without spots is
hyperbolic.
\end{theo}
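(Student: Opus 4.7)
The plan is to invoke a combinatorial hyperbolicity criterion of the Masur--Schleimer / Bowditch flavor: a graph $\mathcal{G}$ is hyperbolic provided one can assign to each ordered pair of vertices a connected preferred subset satisfying two axioms: (i) when the endpoints are close, the subset has uniformly bounded diameter; (ii) for any three vertices the three preferred subsets form a uniformly slim triangle. For $\mathcal{DG}$ I would take the preferred subsets to be the images of \emph{disk surgery sequences}.

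Setup: given essential disks $D,D'$, isotope them to minimal position in $H$ so that $D\cap D'$ is a disjoint union of arcs, properly embedded in both $D$ and $D'$ (circle intersections are removed by innermost-disk surgery inside $H$). Pick an arc $\alpha$ that is outermost on $D'$, cobounding with a subarc $\beta\subset\partial D'$ a subdisk $\Delta\subset D'$ whose interior is disjoint from $D$. Cut $D$ along $\alpha$ and cap each of the two resulting half-disks with a parallel copy of $\Delta$ pushed slightly off $D'$; this yields two properly embedded disks $D_1,D_2\subset H$, each disjoint in $H$ from $D$ and each meeting $D'$ in strictly fewer arcs. A standard argument using the absence of spots on $\partial H$ together with minimal position of $D,D'$ shows that at least one $\partial D_i$ is essential in $\partial H$; designate an essential choice as the successor of $D$. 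Iteration yields a path $D=E_0,E_1,\ldots,E_n=D'$ in $\mathcal{DG}$, and I take $I(D,D')$ to be the set of all disks that arise in such a sequence.

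The main steps are then: (1) verify axiom (i), which is immediate since disjoint $D,D'$ need no surgery; (2) verify axiom (ii), the slim triangle condition. For (2) I would exploit the boundary map $\partial\colon\mathcal{DG}\to\mathcal{CG}$, which is $1$-Lipschitz. Since each surgery strictly decreases $|D\cap D'|$, a surgery sequence projects to an unparameterized quasigeodesic in the curve graph of $\partial H$. The curve graph is hyperbolic by \cite{MM99}, so its triangles are slim, and pulling back along a coarse retraction of $\mathcal{CG}$ onto the meridian set (constructed by applying the same surgery procedure to arcs of an arbitrary curve) yields the slim triangle axiom in $\mathcal{DG}$.

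The main obstacle is step (2). The surgery construction makes several choices (which outermost arc, which of $D_1,D_2$), so one must show that different choices yield intervals that are Hausdorff-close, and one must compare surgery sequences emanating from a common disk to two different targets. Following the strategy of \cite{MS10,H19a,H16}, the key input is that the coarse projection $\mathcal{CG}\to\mathcal{DG}$ is not merely Lipschitz but genuinely a \emph{retraction}: a curve that is already close to a disk projects near that disk. Combined with the hyperbolicity of $\mathcal{CG}$, this forces the early steps of surgery against two distinct targets to fellow-travel until the targets themselves diverge, which gives the slim triangle condition. The no-spots hypothesis enters precisely to guarantee essentiality of the successor disks at each surgery step, so that the sequences never leave $\mathcal{DG}$.
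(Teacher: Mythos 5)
First, a framing remark: the paper does not prove Theorem~\ref{disk}; it is quoted as a known result with references to Masur--Schleimer and two papers of Hamenst\"adt, so there is no internal proof to compare against. I will therefore assess your proposal against the arguments in that literature and against what the rest of the paper itself tells us about the disk graph.

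Your step (2) contains a genuine gap, and it is the crux of the whole theorem. You propose to transport slimness of triangles from $\mathcal{CG}(\partial H)$ back to $\mathcal{DG}$ via a coarse retraction $\mathcal{CG}\to\mathcal{DG}$ built from surgery, so that closeness in the curve graph of two points on your surgery paths yields closeness in the disk graph. For this to produce the slim-triangle axiom in the \emph{intrinsic} metric of $\mathcal{DG}$, the retraction would have to be coarsely Lipschitz as a map into $(\mathcal{DG},d_{\mathcal{DG}})$, which in particular forces $d_{\mathcal{DG}}(D,E)\lesssim d_{\mathcal{CG}}(\partial D,\partial E)$, i.e.\ the boundary map $\mathcal{DG}\to\mathcal{CG}$ would have to be a quasi-isometric embedding. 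That is false, and the failure is exactly the $I$-bundle phenomenon this very paper is built around. If $c$ is an $I$-bundle generator with base $F$, then every disk $D\in\mathcal{RD}(c)$ has boundary meeting $c$ in only two points, and $\partial D\cup c$ never fills $\partial H$, so $d_{\mathcal{CG}}(\partial D,c)\leq 2$; hence $\mathcal{RD}(c)$ has curve-graph diameter at most $4$. Yet by Lemma~\ref{iso} and Lemma~\ref{casewithoutspot}, $\mathcal{RD}(c)$ is quasi-isometrically embedded in $\mathcal{DG}_0$ and is quasi-isometric to the arc graph $\mathcal{A}(F)$, which has infinite diameter. So points that are uniformly close in $\mathcal{CG}$ can be arbitrarily far in $\mathcal{DG}$, and your pullback argument breaks.

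What your argument does legitimately prove, granting the Masur--Minsky result that surgery sequences are unparameterized quasi-geodesics in the curve graph, is that the \emph{disk set} is quasi-convex as a subset of $\mathcal{CG}(\partial H)$ --- but that is Masur--Minsky's quasiconvexity theorem, not hyperbolicity of $\mathcal{DG}$ in its own right. Closing the gap is precisely the content of the references: Masur--Schleimer identify the family of ``holes'' (which, for the disk complex, includes $I$-bundles over arcs, not just curve-graph distance), prove a distance formula summing subsurface projections over all holes, and only then establish hyperbolicity; Hamenst\"adt's route in \cite{H16} builds hyperbolicity inductively through a tower of electrified disk graphs. Both approaches must directly handle the $I$-bundle contributions that are invisible to the curve graph, and a surgery-path argument combined with the curve graph alone cannot see them. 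You also assert, without justification, that decreasing geometric intersection number along surgery forces the image path to be an unparameterized quasi-geodesic in $\mathcal{CG}$; that implication is not automatic and in the literature requires the nested train-track machinery.
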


The main goal of this article
is to show that in contrast to the case of curve graphs, 
Theorem \ref{disk} is not true 
if we allow spots on the boundary.

\begin{theo}\label{diskgraph}
Let $H$ be a handlebody of genus $g\geq 2$ with one spot.
Then the disk graph of $H$ 
contains quasi-isometrically embedded copies of 
$\mathbb{R}^2$. In particular, it is not hyperbolic.
\end{theo}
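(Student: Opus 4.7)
The plan is to construct a quasi-isometric embedding $\Psi:\mathbb{Z}^2\hookrightarrow{\cal DG}$, realized as the orbit of a base disk $D_0$ under two commuting elements $\phi_1,\phi_2$ of the handlebody group $\mathrm{Map}(H)$. I would look for $\phi_1,\phi_2$ supported on two disjoint essential subsurfaces $Y_1,Y_2\subset\partial H\setminus\{p\}$, so that commutativity is automatic and each $\phi_i$ acts as a partial pseudo-Anosov of $Y_i$. Here the spot plays a crucial role: without it, a Dehn twist about a disk-bounding curve $\alpha$ fixes the disk $D_\alpha$, so any orbit of a disk disjoint from $D_\alpha$ can be contracted through $D_\alpha$ and stays bounded, which is what ultimately forces hyperbolicity in Theorem~\ref{disk}. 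With a spot, this short-circuit is typically destroyed because the single vertex $D_\alpha$ is replaced by an infinite family of disks distinguished by their winding around $p$. Natural candidates for $\phi_i$ are point-pushes of $p$ along simple loops $\delta_i\subset\partial H$ that are null-homotopic in $H$ (so that $P_{\delta_i}\in\mathrm{Map}(H)$), possibly combined with Dehn twists about well-chosen disk-bounding curves near $p$.

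Given $\phi_1,\phi_2$ and a base disk $D_0$ with $\partial D_0$ meeting both supports essentially, set $\Psi(n,m):=\phi_1^n\phi_2^m(D_0)$. The upper bound $d_{\cal DG}(D_0,\Psi(n,m))\leq K(|n|+|m|)$ follows by concatenating bounded-step paths realizing each application of $\phi_i^{\pm1}$. The matching lower bound would come from two coarsely Lipschitz projections $\pi_i:{\cal DG}\to{\cal CG}(Y_i)$ sending a disk to the restriction of its boundary to $Y_i$: since $\phi_i$ is a pseudo-Anosov on $Y_i$ while $\phi_{3-i}$ fixes $Y_i$, the image $\pi_i(\Psi(n,m))$ moves a distance $\asymp|n|$ (respectively $|m|$) in ${\cal CG}(Y_i)$, so the Lipschitz property forces $d_{\cal DG}(D_0,\Psi(n,m))\gtrsim \max(|n|,|m|)$, which together with the upper bound yields the quasi-isometric embedding.

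The main obstacle is establishing the Lipschitz property of $\pi_i$ in the spotted setting, that is, the estimate that two disjoint essential disks in $H$ have uniformly bounded projection to each $Y_i$. This is the spotted analogue of the disk-projection estimate that underlies Theorem~\ref{disk} in \cite{MS10,H19a,H16}; its verification requires choosing $Y_i$ so that its boundary multicurve bounds disks in $H$ and so that the interaction with the spot near $\partial Y_i$ does not produce uncontrolled projection jumps. This, together with verifying the positive translation length of $\phi_i$ on ${\cal CG}(Y_i)$ (equivalently, the absence of a single-disk short-circuit for the $\phi_i$-orbit), is the technical heart of the argument. For general $g\geq 2$ the same scheme should work, reducing to the genus-$2$ case by locating $Y_1,Y_2$ inside a suitably chosen submanifold containing $p$.
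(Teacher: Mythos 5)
Your plan has the right large-scale shape — one flat direction should come from the spot (a point-push around the puncture), and the other from the geometry of disks away from the spot, with a Lipschitz-projection argument supplying the lower bound — and you correctly identify where the work lies. But there are two concrete gaps that the proposal does not close, and one internal inconsistency that signals the gaps are real rather than notational.

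First, the projection estimate you need is in tension with the choice of $Y_i$ you propose. For the lower bound $d_{\cal DG}(D_0,\Psi(n,m))\gtrsim |n|$ via $\pi_1:{\cal DG}\to{\cal CG}(Y_1)$ you need a bounded-geodesic-image type of control: a short path in $\cal DG$ between two disks should not be able to sweep a large distance in ${\cal CG}(Y_1)$. That requires $Y_1$ to be a \emph{hole} for the disk graph, i.e.\ every essential disk must meet $Y_1$; otherwise a path can pass through disks disjoint from $Y_1$ and the projection becomes undefined or uncontrolled. Yet you ask that $\partial Y_i$ \emph{bound disks}, which is exactly the opposite condition — it produces disks disjoint from $Y_i$ and the short-circuit you yourself warned about. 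You appeal to the spot making the short-circuit disk ``an infinite family,'' which is the right intuition, but this is precisely the point where a concrete estimate is missing: how much does it cost to pass through that family? This is not automatic and is where an argument is needed, not a heuristic. In the paper this is handled by fixing an $I$-bundle generator $c$, working inside ${\cal RD}(c)$, and measuring a genuine twist parameter $\Xi$ around the punctured annulus $B$ via a hyperbolic-geometry normal form; the Lipschitz retraction ${\cal DG}\to\Omega$ in Proposition~\ref{calculate2} is what makes the cost of crossing the short-circuit family quantitative.

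Second, the proposed candidates for $\phi_1,\phi_2$ do not cohere. You ask for two commuting partial pseudo-Anosovs supported on disjoint subsurfaces $Y_1,Y_2\subset\partial H\setminus\{p\}$, but then suggest point-pushes of $p$ as natural candidates; a point-push of $p$ is supported in any neighborhood of its defining loop through $p$, so it is not supported in $\partial H\setminus\{p\}$, and it is a multi-twist, not a partial pA on any $Y_i$. More seriously, you need $\phi_1$ to lie in ${\rm Map}(H)$ \emph{and} act with positive translation length on ${\cal CG}(Y_1)$; arranging this is nontrivial and essentially forces $Y_1$ to be one side of an $I$-bundle generator, which is where the paper's machinery enters. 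The paper sidesteps the need for a specific pseudo-Anosov by taking the whole graph ${\cal RD}(c)\cong{\cal A}(F)$ as one factor (using only that it has infinite diameter), and the cyclic group generated by the point-push $\Lambda$ about the punctured annulus $B$ as the other; the flat is then any biinfinite geodesic in ${\cal A}(F)$ times $\mathbb{Z}$, with quasi-isometric embeddedness guaranteed by a retraction ${\cal DG}\to\Omega=\bigcup_k\Lambda^k\iota({\cal RD}(c))$. Your orbit-based construction, even if the candidates were correctly specified, would still need exactly that retraction (or an equivalent projection estimate through the infinite family of disks near $\partial Y_i$); as written, the proposal names that estimate as ``the technical heart'' but does not supply it, and the particular framework you set up (disjoint supports both bounding disks) makes it harder, not easier, to prove.
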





Theorem \ref{diskgraph} implies that 
disk graphs can not be used 
effectively to obtain a geometric understanding
of the handlebody group ${\rm Map}(H)$ of a handlebody $H$ of genus
$g\geq 3$ 
paralleling the program developed by Masur and 
Minsky for the mapping class group \cite{MM00}.
The analogue of the strategy of Masur and Minsky 
would consist of
cutting a handlebody open along an embedded disk
which yields a (perhaps disconnected) handlebody
with one or two spots on the boundary and studying disk
graphs in the cut open handlebody.

A systematic study of 
groups to which the strategy laid out by Masur and Minsky can be applied 
was recently initiated by Behrstock, Hagen and Sisto \cite{BHS17}, 
and these groups are called \emph{hierarchically hyperbolic}. 
Such groups have quadratic Dehn functions, but  
for $g\geq 3$ the Dehn function of ${\rm Map}(H)$ is exponential \cite{HH19}.
Hence ${\rm Map}(H)$ can not be hierarchically hyperbolic. 
However, the geometric mechanism behind an exponential
Dehn function for ${\rm Map}(H)$ 
is not detected by the failure of being hierarchically
hyperbolic in an obvious way.

Theorem \ref{diskgraph} has an analogue for geometric graphs
related to the outer automorphism group ${\rm Out}(F_g)$ 
of the free group on $g\geq 2$ generators. 
Namely, 
doubling the handlebody $H$ yields a connected sum
$M=\sharp_gS^2\times S^1$ of $g$ copies of $S^2\times S^1$
with $m$ marked points. A deep result of Laudenbach \cite{L74} shows that
${\rm Out}(F_n)$ is a cofinite quotient of the group of isotopy classes of 
homeomorphisms of $M$. 

A doubled disk is an embedded essential sphere in $M$, which is a 
sphere which is not homotopically trivial or homotopic into a marked point. 
The \emph{sphere graph} of $M$ is the graph whose
vertices are isotopy classes of embedded
essential spheres in $M$ and where two such
spheres are connected by an edge of length one if and 
only if they can be realized disjointly. 
As before, an isotopy of spheres is required to be
disjoint from the marked points.
The sphere graph of a doubled handlebody without
marked points 
is hyperbolic \cite{HM13}.


Paralleling the result in Theorem \ref{diskgraph} we have

\begin{theo}\label{spheregraph}
Let $g\geq 2$ and let $M$ be a doubled handlebody
of genus $g$ with one marked point.
If $g$ is even then 
the sphere graph of $M$ contains quasi-iso\-me\-tri\-cally embedded
copies of $\mathbb{R}^2$. In particular, it is not hyperbolic.
\end{theo}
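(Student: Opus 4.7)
\medskip

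\noindent\emph{Proof plan.} The plan is to bootstrap Theorem \ref{spheregraph} from Theorem \ref{diskgraph} by transferring the $\mathbb{R}^2$--flat in the disk graph of the spotted handlebody to the sphere graph of its double. Let $\sigma:M\to M$ be the involution with fixed set $\partial H$ exchanging the two copies of $H$ in $M=DH$; by hypothesis its fixed marked point coincides with the spot of $H$. Each essential disk $D\subset H$ disjoint from the spot gives rise to a \emph{doubled sphere} $\hat D:=D\cup_{\partial D}\sigma(D)\subset M$ disjoint from the marked point, and disjoint disks double to disjoint spheres. Hence
\begin{equation*}
\Psi:{\cal D\cal G}(H)\longrightarrow {\cal S\cal G}(M),\qquad D\longmapsto \hat D,
\end{equation*}
is a well-defined $1$--Lipschitz map. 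The flat in ${\cal D\cal G}(H)$ provided by Theorem \ref{diskgraph} is an orbit map $F:\mathbb{Z}^2\to {\cal D\cal G}(H)$, $(m,n)\mapsto \phi_1^m\phi_2^nD_0$, for two commuting handlebody group elements $\phi_1,\phi_2$ and a base disk $D_0$. Choosing $\phi_1,\phi_2$ supported in the interior of $H$, they extend $\sigma$--equivariantly to $M$ while fixing the marked point, so that $\Psi\circ F:\mathbb{Z}^2\to {\cal S\cal G}(M)$ is a coarsely Lipschitz orbit map.

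The heart of the argument is the matching lower bound: one must show that no path in ${\cal S\cal G}(M)$ between $\Psi(F(m,n))$ and $\Psi(F(m',n'))$ can be substantially shorter than the best path in ${\cal D\cal G}(H)$ between the underlying disks. The plan for this is to construct a coarsely Lipschitz projection $\pi:{\cal S\cal G}(M)\to {\cal D\cal G}(H)$ defined on sphere vertices in a neighborhood of the flat. Given an essential sphere $S\subset M$, isotope it to meet $\partial H$ transversely in a minimal system of simple closed curves, and resolve the components of $S\cap H$ to an essential disk by the standard surgery argument used for sphere systems in doubled handlebodies (compare the argument in \cite{HM13}). Disjointness of two spheres yields disk collections in $H$ that are disjoint, and standard surgery estimates show the resulting vertices of ${\cal D\cal G}(H)$ lie at uniformly bounded distance. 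The composition $\pi\circ\Psi\circ F$ then coarsely equals $F$, so the quasi-isometric lower bound for $F$ transfers to $\Psi\circ F$.

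The parity hypothesis $g$ even is expected to enter in the selection of $\phi_1,\phi_2$ and of the base disk $D_0$: one needs a symmetric splitting of $H$ that induces a symmetric splitting of $M$ along a separating $\sigma$--invariant sphere into two diffeomorphic halves of equal genus $g/2$, so that both commuting directions admit compatible $\sigma$--equivariant extensions respecting the same topological decomposition of $M$. This is automatic when $g$ is even, and appears to fail (or at least not to be available to this method) when $g$ is odd, which is why the theorem is stated with this restriction.

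The principal obstacle is the construction and estimates for $\pi$. The sphere graph is strictly richer than the image of $\Psi$: non-symmetric spheres, and spheres meeting $\partial H$ in many components, a priori could provide shortcuts. The main technical work is therefore to verify, using a careful surgery analysis along $\partial H$, that any efficient sphere path can be converted into a disk path of comparable length, so that the flat $\Psi\circ F$ cannot be collapsed inside ${\cal S\cal G}(M)$. Once this projection is in hand, the two paragraphs above close the argument.
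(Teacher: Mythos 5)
Your overall architecture — double disks to spheres, then build a coarse Lipschitz projection back to certify the lower bound — is in the right spirit, but the concrete realization diverges from the paper and the key technical step is not a valid argument as written.

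First, the projection. You propose $\pi:{\cal S\cal G}(M)\to{\cal D\cal G}(H)$ by intersecting a sphere $S$ with $\partial H$ and ``resolving the components of $S\cap H$ to an essential disk by the standard surgery argument.'' There is no such standard argument: $S\cap H$ is generically a planar surface with many boundary components (or worse) sitting in $H$, and surgery to a single essential disk is neither canonical nor coarsely well defined, and disjointness of spheres does not give disjointness of any reasonable surgered disks. The paper does something structurally different: it never projects to ${\cal D\cal G}(H)$ at all. It fixes an embedded orientable surface $F\subset M$ of genus $g/2$ with connected boundary through the marked point, such that $M$ is the double of the trivial $I$-bundle over $F$, and projects ${\cal S\cal G}(M)$ to the arc graph ${\cal A}(F)$ via the normal form / minimal position machinery of \cite{HH15} (Definition \ref{minimal}, Lemma \ref{minimal2}, Proposition \ref{next1}). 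That machinery is developed for \emph{simple sphere systems}, not single spheres, precisely because a single sphere does not determine an intersection arc with $F$ up to bounded ambiguity; one must first complete to a simple system and verify $A$-tight minimal position. This is the genuine technical content, and it is missing from your sketch.

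Second, the flat. You describe the $\mathbb{R}^2$ in ${\cal D\cal G}(H)$ as an orbit map $(m,n)\mapsto\phi_1^m\phi_2^n D_0$ for two commuting handlebody group elements supported in the interior of $H$. That is not what Theorem \ref{diskgraph} produces. The flat there is $\Omega=\bigcup_k\Lambda^k\iota({\cal R\cal D}(c))\cong{\cal R\cal D}(c)\times\mathbb{Z}$, where one factor is a biinfinite geodesic in the arc graph ${\cal A}(F)\cong{\cal R\cal D}(c)$ (not a group orbit) and the other is generated by the spot-pushing map $\Lambda$, which is supported in a neighborhood of the once-spotted annulus in $\partial H$ — certainly not in the interior of $H$. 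Consequently the ``choose $\phi_1,\phi_2$ supported in ${\rm int}(H)$ so they extend $\sigma$-equivariantly'' step does not apply.

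Third, the parity. You attribute the hypothesis $g$ even to the need for a symmetric splitting of $M$ into two halves of equal genus. The actual reason is stated in the paper: $g$ even is exactly the condition under which the base surface $F$ of the $I$-bundle can be taken orientable (of genus $g/2$ with one boundary component), and the retraction ${\cal S\cal G}_0\to\Upsilon_0({\cal A}(F_0))$ from Proposition 4.18 of \cite{HH15} is only available for orientable $F_0$. For odd $g$ one would need a non-orientable analogue, which does not yet exist. This is an input constraint on the cited theorem, not a constraint on $\sigma$-equivariance of the flat.

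In short, the upper bound ($\Psi$ is $1$-Lipschitz) is fine, but the projection you need is to an arc graph, built via the \cite{HH15} normal form for sphere systems, and the $\mathbb{Z}^2$ comes from $\mathcal{A}(F)\simeq\mathcal{A}(F_0)\times\mathbb{Z}$ (Lemma \ref{firststep}), not from a commuting orbit of interior-supported maps.
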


The argument in the proof of 
Theorem \ref{spheregraph} 
uses Theorem \ref{diskgraph} and 
a result
in \cite{HH15} which relates the sphere graph in a connected sum 
$\sharp_g S^2\times S^1$ for $g$ even 
to the arc graph of an oriented surface of genus $g/2$ with connected
non-empty 
boundary. 
A corresponding result for odd $g$ and a non-orientable surface
with a single boundary component would yield 
Theorem \ref{spheregraph} for odd $g\geq 3$, 
but at the moment, such a result is not available.

As in the case of disk graphs, 
this indicates that sphere graphs are of limited use for obtaining 
an effective geometric understanding of 
${\rm Out}(F_g)$. 
Note that as in the case of the handlebody group, for $g\geq 3$ 
the Dehn function of 
${\rm Out}(F_g)$ 
is exponential \cite{BV12,HM10}. 

In a sequel to this article \cite{H12}, it is shown that
the disk graph of a handlebody of genus $g\geq 2$
with two spots contains
quasi-isometrically embedded $\mathbb{R}^2$, and
the sphere graph of a doubled handlebody
with two spots contains quasi-isometrically embedded $\mathbb{R}^n$ for 
every $n\geq 2$.  
We conjecture that the disk graph of a handlebody $H$ 
with $m\geq 3$ spots is quasi-isometrically embedded in the
curve graph of $\partial H$.


\bigskip
\noindent
{\bf Acknowledgement:} I am very grateful to the anonymous referee
of this paper for numerous and detailed comments which helped to improved the
exposition. 

\section{Once spotted handlebodies}

The goal of this section is to 
construct quasi-isometrically embedded copies of 
$\mathbb{R}^2$ in the disk graph of a handlebody with
a single spot.

Thus let $H$ be 
a handlebody of genus
$g\geq 2$ with a single spot. 
Let $H_0$ be the handlebody obtained from 
$H$ by removing the spot and let 
\[\Phi:H\to H_0\]
be the spot removal map. The image under $\Phi$ 
of an essential (that is, not contractible or homotopic into the spot) diskbounding 
simple closed curve in $\partial H$ is an essential diskbounding
simple closed curve in $\partial H_0$.

The handlebody $H_0$ without spots  
can be realized as a fiber bundle over 
a surface $F$ with non-empty connected boundary $\partial F$
whose fiber is the closed interval
$I=[0,1]$.  Such a fiber bundle is called an \emph{$I$-bundle}. 
We summarize from Section 3 of \cite{H16} (p.381-383)
some properties of such $I$-bundles used in the sequel.

There are two different ways a handlebody $H_0$ of genus $g$ 
can arise 
as an $I$-bundle over a surface $F$ with connected boundary
$\partial F$. 
In the first case, the surface $F$ is orientable.  Then the 
genus $g$ of $H_0$ is even and the 
$I$-bundle is trivial. The genus of $F$ equals $g/2$, 
and the boundary $\partial F$ of $F$ 
defines an isotopy class of a separating simple closed curve $c$ 
on $\partial H_0$ which 
decomposes $\partial H_0$ into two surfaces 
of genus $g/2$, with a single boundary component.

If the surface $F$ is non-orientable, then $F$ is 
the orientable 
$I$-bundle over the connected sum of $g$ projective planes with a disk, and
the $I$-bundle is non-trivial. 
The boundary $\partial F$ defines the isotopy class of 
a non-separating simple closed curve
$c$ in $\partial H_0$. 
The complement of an open annulus about $c$ in $\partial H_0$ is 
the orientation cover of $F$. 

Following Definition 3.3 of \cite{H16}, 
define an \emph{$I$-bundle generator} for $H_0$ to be
an essential simple closed curve $c\subset \partial H_0$ so that
$H_0$ can be realized as an $I$-bundle over a compact
surface $F$ with connected boundary
$\partial F$ and such that $c$ is freely homotopic to 
$\partial F\subset \partial H_0$. 
The surface $F$ is then called the \emph{base} of the $I$-bundle.

An $I$-bundle generator
$c$ in $\partial H_0$ is \emph{diskbusting}, 
which means that it has an essential 
intersection with every disk (see \cite{MS10,H19a}). 
Namely, the base $F$ of the $I$-bundle is a deformation retract of $H_0$. 
Thus if $\gamma$ is any essential closed curve on $\partial H_0$ 
which does not intersect $c$, then $\gamma$ projects to 
an essential closed curve on $F$. 
Such a curve is not nullhomotopic in $H_0$ and hence it can not be diskbounding.

As established in \cite{MS10,H16,H19a}, $I$-bundle generators play a special
role for the geometry of the disk graph of $H_0$. Our goal is to take advantage
of this fact for the understanding of the geometry of the handlebody with one spot. 
To this end define the 
\emph{arc graph} ${\cal A}(X)$ of a compact 
surface $X$ of genus $n\geq 1$ with 
connected boundary $\partial X$ to be the graph whose
vertices are isotopy classes of 
embedded essential arcs in $X$ with endpoints on the
boundary, and isotopies are allowed to move the endpoints 
of an arc along $\partial X$. 
Two such arcs are connected by
an edge of length one if and only if they can
be realized disjointly. The arc graph ${\cal A}(X)$ of $X$ is 
hyperbolic \cite{MS10}.

For an $I$-bundle generator $c$ in $H_0$ let 
${\cal R\cal D}(c)$ be the complete subgraph of the disk graph 
${\cal D\cal G}_0$ of 
$H_0$ consisting of disks which intersect $c$ in precisely two points.
The boundary of each such disk is an $I$-bundle over 
an arc in the base $F$ of 
the $I$-bundle corresponding to $c$ (see the discussion preceding
Lemma 4.2 of \cite{H16}). 
Namely, the $I$-bundle
over an arc in $F$ with endpoints on $\partial F$ is an embedded
disk in $H_0$. On the other hand, the boundary of 
a disk in $H_0$ defines the trivial element in the
fundamental group of $H_0$. Thus if $\beta$
is a diskbounding simple closed curve in $\partial H_0$ which intersects
$c$ in precisely two points, then 
 the homotopy classes relative to $c$ of the
two components of $\beta-c$ are exchanged under the
orientation reversing involution of $H_0$ 
which exchanges the endpoints of a fiber
in the $I$-bundle. As $\beta$ has two essential intersections with
$c$, this then implies that up to homotopy, the two components of $\beta-c$ 
trace through the two different preimages of the same points in $F$.

Now two disks intersecting $c$ in precisely two points are disjoint
if and only if the corresponding arcs in $F$ are disjoint and hence
we have 

\begin{lemma}\label{iso}
The
graph ${\cal R\cal D}(c)$ is isometric to the arc graph 
${\cal A}(F)$ of $F$.
\end{lemma}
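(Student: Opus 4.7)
The plan is to exhibit an explicit bijection $\Psi:{\cal R\cal D}(c)\to{\cal A}(F)$ which preserves adjacency in both directions. Let $\pi:H_0\to F$ be the $I$-bundle projection. Given a disk $D$ representing a vertex of ${\cal R\cal D}(c)$, isotope $D$ so that $\partial D$ meets $c$ transversely in exactly two points. Then $\partial D\setminus c$ has two components. The discussion preceding the lemma already shows that, up to homotopy rel.\ $c$, these two components are exchanged by the involution of $\partial H_0$ which flips the $I$-fibres of the bundle restricted to $\partial H_0$, and hence both project under $\pi$ to the same embedded arc $\alpha\subset F$ with endpoints on $\partial F$. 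Set $\Psi(D)=[\alpha]$. A routine check using transverse isotopies of $D$ shows that $[\alpha]$ only depends on the isotopy class of $D$, so $\Psi$ is well defined on vertices.

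Next I would construct an inverse. Given an essential arc $\alpha\subset F$ with endpoints on $\partial F$, form $D_\alpha=\pi^{-1}(\alpha)\subset H_0$. Since $\alpha$ is an arc with endpoints on $\partial F$, its preimage is a disk (it is an $I$-bundle over an interval and meets $\partial H_0$ in a simple closed curve obtained by joining the two lifts of $\alpha$ along the fibres over $\partial \alpha$). Moreover $D_\alpha\cap c$ consists of the two points above $\partial\alpha$, so $[D_\alpha]\in{\cal R\cal D}(c)$. Setting $\Phi([\alpha])=[D_\alpha]$ gives a map ${\cal A}(F)\to{\cal R\cal D}(c)$, and isotopies of $\alpha$ (keeping endpoints on $\partial F$) clearly lift to isotopies of $D_\alpha$, so $\Phi$ is well defined.

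I then need to verify $\Phi\circ\Psi=\mathrm{id}$ and $\Psi\circ\Phi=\mathrm{id}$. For the second identity this is immediate, because $\Psi(D_\alpha)$ is by construction represented by $\alpha$. For the first, given $D\in{\cal R\cal D}(c)$ with $\Psi(D)=[\alpha]$, both $D$ and $D_\alpha$ are embedded disks whose boundaries run through the same two fibre-points on $c$ and whose two $c$-arc halves project to $\alpha$; a fibre-preserving isotopy, pushing $D$ onto $\pi^{-1}(\alpha)$ one fibre at a time, shows $D$ is isotopic to $D_\alpha$. Hence $\Psi$ is a bijection.

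Finally, adjacency: two disks $D_1,D_2\in{\cal R\cal D}(c)$ can be realized disjointly iff the corresponding arcs $\alpha_1,\alpha_2$ in $F$ can be realized disjointly. One direction is trivial: if $\alpha_1,\alpha_2$ are disjoint then $\pi^{-1}(\alpha_1)$ and $\pi^{-1}(\alpha_2)$ are disjoint disks representing the respective classes. For the converse, disjoint realizations $D_1,D_2$ project to arcs in $F$ whose interiors are disjoint (since the $I$-bundle structure on $D_i$ recovers $\alpha_i$ from $\pi(D_i)$), so $\alpha_1,\alpha_2$ are disjoint after a boundary isotopy. Thus $\Psi$ is an isometry of graphs, proving the lemma. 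The main technical point, and the only place a little care is needed, is the argument that the disk $D$ is isotopic to the fibered disk $D_\alpha$ over its projected arc; everything else is an essentially formal consequence of the fibre structure.
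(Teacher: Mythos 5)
Your proof follows essentially the same route the paper takes: the correspondence between disks in ${\cal R\cal D}(c)$ and arcs in $F$ via the $I$-bundle structure, established in the paragraph immediately preceding the lemma in the paper, together with the observation that disjointness of disks corresponds exactly to disjointness of projected arcs. The one place where your write-up is looser than it should be is the verification of $\Phi\circ\Psi=\mathrm{id}$: the claimed ``fibre-preserving isotopy pushing $D$ onto $\pi^{-1}(\alpha)$ one fibre at a time'' is not something one can produce directly, since a priori the interior of $D$ need not respect the fibration; the clean way to finish is to note that $\partial D$ and $\partial D_\alpha$ are isotopic simple closed curves on $\partial H_0$ and then invoke the standard fact that in an irreducible $3$-manifold (in particular a handlebody) two properly embedded essential disks with isotopic boundaries are isotopic. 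The paper leaves this same step implicit, so this is a minor imprecision rather than a substantive gap.
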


The arc graph of a surface $F$ with non-empty
boundary $\partial F$ is a complete subgraph of 
another geometrically
defined graph, the so-called \emph{arc and curve graph}. Its vertices 
are essential simple closed curves in $F$ or arcs with endpoints 
on $\partial F$, and two such arcs or curves are connected
by an edge of length one if they can be realized disjointly. 
The arc and curve graph contains the curve graph of 
$F$ as a complete subgraph, and the inclusion of the 
curve graph into the arc and curve graph is known to be
a quasi-isometry unless $F$ is a sphere with at most
three holes or a projective plane with at most three holes
(Lemma 4.1 of \cite{H16}). 
Recall that a map $\phi:X\to Y$ be tween two metric spaces $X,Y$ is 
an \emph{$L$-quasi-isometric embedding} if for all $x,y\in X$ we have
\[d(x,y)/L-L\leq d(\phi(x),\phi(y))\leq Ld(x,y)+L,\] 
and it is called an \emph{$L$-quasi-isometry} if moreover 
its image is $L$-dense, that is, for every
$y\in Y$ there exists some $x\in X$ such that $d(\phi(x),y)\leq L$.

The arc graph ${\cal A}(F)$ of $F$ is $1$-dense in the 
arc and curve graph of $F$, but 
the inclusion of ${\cal A}(F)$ into 
the arc and curve graph of $F$ is a 
quasi-isometry only if the
genus of $X$ equals one \cite{MS10} (see also
\cite{H16}).

A \emph{coarse $L$-Lipschitz retraction} of a metric space 
$(X,d)$ onto a subspace $Y$ is a coarse $L$-Lipschitz map 
$\Psi:X\to Y$ (this means that  
$d(\Psi(x),\Psi(y))\leq Ld(x,y)+L$ for some $L\geq 1$ 
and all $x,y$)
with the additional property that there exists
a number $C>0$ with 
$d(\Psi (y),y)\leq C$ for all $y\in Y$.
If $X$ is a geodesic metric space then the image
$Y$ of a coarse Lipschitz retraction is a
\emph{coarsely quasi-convex} subspace of $X$, that is, any two points 
in $Y$ can be connected by a uniform quasi-geodesic
(for the metric of $X$) which is contained in 
a uniformly bounded neighborhood of $Y$.

\begin{lemma}\label{casewithoutspot}
Let $c$ be an $I$-bundle generator of the handlebody $H_0$. 
There exists a coarse Lipschitz retraction 
$\Theta_0:{\cal D\cal G}_0\to
{\cal R\cal D}(c)$ whose restriction to ${\cal R\cal D}(c)$ is the identity.
\end{lemma}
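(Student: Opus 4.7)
The plan is to define $\Theta_0(D)$ by extracting from $D$ an essential arc in the base surface $F$ of the $I$-bundle and then applying the isometry of Lemma~\ref{iso}.

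Let $p:\partial H_0\setminus c\to F$ be the bundle projection, a twofold cover with deck involution $\sigma$; this $\sigma$ is the restriction of the orientation-reversing involution of $H_0$ exchanging the endpoints of each fiber, and in particular it sends essential disks to essential disks. Given $D\in{\cal D\cal G}_0$, I put $\partial D$ in minimal position with $c$. Since $c$ is diskbusting, $\partial D\cap c\neq\emptyset$, and cutting $\partial D$ at these intersection points yields finitely many disjoint embedded arcs in $\partial H_0\setminus c$ with endpoints on $c$. Each such arc projects under $p$ to an arc in $F$ with endpoints on $\partial F$, and a standard bigon argument based on the minimality of the intersection ensures that at least one such projected arc is essential in $F$. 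I then set $\Theta_0(D)$ to be any disk in ${\cal R\cal D}(c)$ corresponding via Lemma~\ref{iso} to one such essential arc. For $D\in{\cal R\cal D}(c)$ one has $|\partial D\cap c|=2$, and by the discussion preceding Lemma~\ref{iso} the two arcs of $\partial D\setminus c$ both project to the defining arc of $D$; hence $\Theta_0(D)=D$, verifying the retraction property.

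For the coarse Lipschitz estimate, let $D_1,D_2$ be connected by an edge in ${\cal D\cal G}_0$. Realize them simultaneously disjointly and in minimal position with $c$ by a standard innermost-disk argument; the arc systems $\partial D_i\setminus c$ are then disjoint in $\partial H_0\setminus c$. Were their projections to $F$ also disjoint, one would immediately obtain $d_{{\cal A}(F)}(\Theta_0(D_1),\Theta_0(D_2))\leq 1$. In general, this fails because for disjoint arcs $\alpha_i\subset\partial D_i\setminus c$ one only has $p(\alpha_1)\cap p(\alpha_2)=p(\alpha_1\cap\sigma(\alpha_2))$, which may be nonempty. To handle this I would work with the $\sigma$-invariant arc systems obtained by adjoining to $\partial D_i\setminus c$ its $\sigma$-image; these descend to embedded multiarcs on $F$. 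Any intersection between essential components of the two descended multiarcs is then resolved by an innermost-bigon surgery, and because the relevant arcs arise as pieces of disk boundaries — so that the intermediate surgered curves themselves bound disks in $H_0$ — the procedure terminates in an essential arc in $F$ at uniformly bounded distance in ${\cal A}(F)$ from both $\Theta_0(D_1)$ and $\Theta_0(D_2)$.

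The main obstacle is precisely this surgery step: disjointness of arcs in the cover $\partial H_0\setminus c$ does not pass to disjointness of their projections in $F$, and upgrading the weaker relation $p(\alpha_1)\cap p(\alpha_2)=p(\alpha_1\cap\sigma(\alpha_2))$ to a quantitative distance bound in ${\cal A}(F)$ is where the detailed geometry of disk boundaries (as opposed to arbitrary arcs in the cover) is used. Everything else in the argument is formal, and the lemma will follow once this bounded-distance bound is established.
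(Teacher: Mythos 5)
Your construction is essentially the paper's proof when $c$ is a \emph{separating} $I$-bundle generator: in that case $\partial H_0\setminus c$ has two components, the bundle projection restricted to the chosen component $F$ is a homeomorphism, so disjoint disk boundaries give disjoint arc systems in $F$ with no surgery required, and $Q\circ\Upsilon_0$ (in the paper's notation) is directly one-Lipschitz. You correctly identify that the whole difficulty lies in the non-separating case, but the repair you sketch does not close that gap and in fact cannot be made to work in the form you state it.

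Two concrete problems. First, the claim that adjoining $\sigma$-images produces ``$\sigma$-invariant arc systems \dots [which] descend to embedded multiarcs on $F$'' is false in general: for a disk $D$ whose boundary is not $\Omega$-invariant (which is the generic situation), $\partial D$ and $\sigma(\partial D)$ may intersect, so $(\partial D\setminus c)\cup\sigma(\partial D\setminus c)$ is not an embedded arc system in the cover, and its image in $F$ has self-intersections — this is exactly what the paper flags as the obstruction (``the projection to $F$ of the boundary of some other disk may have self-intersections, and hence there is no obvious projection of ${\cal D\cal G}_0$ onto ${\cal R\cal D}(c)$''). Note also that $p=p\circ\sigma$, so symmetrizing changes nothing about the projected image. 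Second, even granting some surgery scheme on the self-intersecting projections, the assertion that it terminates in an arc at \emph{uniformly} bounded ${\cal A}(F)$-distance from the outputs for both $D_1$ and $D_2$ is exactly the content that needs proof; innermost-bigon surgery on a closed curve of high self-intersection number can move you arbitrarily far in the arc graph, and the fact that intermediate curves bound disks gives you no obvious control on the number of surgeries or the displacement. You have, in effect, restated the lemma rather than proved it.

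The paper's route in the non-separating case is quite different and avoids constructing a projection by surgery altogether: it proves that the inclusion ${\cal R\cal D}(c)\hookrightarrow{\cal D\cal G}_0$ is a quasi-isometric embedding. This uses the isometry ${\cal R\cal D}(c)\cong{\cal A}(F)$, the electrified disk graphs ${\cal E\cal D\cal G}$ and ${\cal E\cal D\cal G}(2,\partial H_0)$ of \cite{H16}, the quasi-isometry between ${\cal E\cal R\cal D}(c)$ and the arc-and-curve graph of $F$, uniform quasi-convexity results (Corollaries 4.6, 4.7, 2.8 of \cite{H16}), and an iterated enlargement/hierarchy-path construction controlled by the distance formula (Corollary 6.3 of \cite{H16}). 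Once quasi-isometric embedding is established, hyperbolicity of ${\cal D\cal G}_0$ (Theorem~\ref{disk}) yields quasi-convexity of ${\cal R\cal D}(c)$, and the coarse Lipschitz retraction is then simply the nearest-point projection. If you want to complete your argument, you should either import this machinery, or else find an independent proof that the nearest-point projection has the required properties — but the local surgery argument as written will not produce the uniform distance bound.
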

\begin{proof} If $c$ is a \emph{separating} $I$-bundle generator, then 
the base of the $I$-bundle can be identified with
a component $F$ of $\partial H_0-c$. Note that there two 
choices for the surface $F$. One of these two choices will be 
fixed throughout this proof.

Since the boundary $\partial D$ of a disk $D$ is an embedded simple closed curve in 
$\partial H_0$ and as
$c$ is diskbusting, the intersection $\partial D\cap F$ consists of a non-empty
collection of pairwise disjoint simple arcs with endpoints on $\partial F$.
The map 
\[\Upsilon_0:{\cal D\cal G}_0\to {\cal A}(F)\] 
which associates to a disk $D$ a component of 
$\partial D\cap F$
is coarsely well defined:
Although it depends on choices, any other choice 
$\Upsilon_0^\prime$ maps a disk $D$ to an arc disjoint from
$\Upsilon_0(D)$. 
If we denote by $Q:{\cal A}(F)\to {\cal R\cal D}(c)$ the map which 
associates to an arc $\alpha$ in $F$ the $I$-bundle over $\alpha$, then
the disks $Q(\Upsilon_0(D)),Q(\Upsilon_0^\prime(D))$   
are disjoint as well. 

Furthermore, if $D,D^\prime$ are disjoint disks then 
the arcs $\Upsilon_0(D),\Upsilon_0(D^\prime)$ are disjoint and 
hence 
$d_{{\cal D\cal G}_0}(Q\Upsilon_0(D),Q\Upsilon_0(D^\prime))\leq 1$. This shows that
$Q\circ \Upsilon_0$ is coarsely one-Lipschitz. As a disk  
$D\in {\cal R \cal D}(c)$ intersects $F$ in a single arc, we have
$Q\Upsilon_0(D)=D$. 
Thus the map $Q\circ \Upsilon_0$ is indeed a coarse 
one-Lipschitz retraction which 
completes the proof of the lemma in the case
that $c$ is separating. Note however that the relation between 
the two Lipschitz retractions constructed in this way 
from the two distinct components of $\partial H_0-c$ is unclear.  

The above argument does not
extend to non-separating $I$-bundle generators in any straightforward way.
Namely, if $c$ is a non-separating $I$-bundle
generator, then although up to homotopy, a disk which intersects
$c$ in precisely two points is invariant under 
the natural orientation reversing involution 
$\Omega$ of the corresponding $I$-bundle
which exchanges the two endpoints of a fiber, 
the projection to $F$ of the boundary of some other disk 
may have self-intersections,
and hence there is no obvious 
projection of ${\cal D\cal G}_0$ onto ${\cal R\cal D}(c)$ as in the case of a 
separating $I$-bundle generator. 

Our strategy is to establish instead 
that the inclusion ${\cal R\cal D}(c)\to {\cal D\cal G}_0$ is a 
quasi-isometric embedding.
Namely, if this holds true then as ${\cal D\cal G}_0$ is hyperbolic, the subspace
${\cal R\cal D}(c)$ is
\emph{quasi-convex}, that is, there exists a constant 
 $C>0$ such that any geodesic in 
${\cal D\cal G}_0$ connecting two points in ${\cal R\cal D}(c)$ is contained in 
the $C$-neighborhood of ${\cal R\cal D}(c)$. Then a (coarsely well defined)
shortest distance projection ${\cal D\cal G}_0\to {\cal R\cal D}(c)$
is a coarsely Lipschitz retraction by hyperbolicity.

That the inclusion ${\cal R\cal D}(c)\to {\cal D\cal G}_0$ is indeed a quasi-isometric
embedding follows from 
Theorem 10.1 of \cite{MS10} (which can only be used indirectly
as the ``holes'' are not precisely specified) and, more specifically,
Corollary 4.6 and Corollary 4.7 of \cite{H16}. These formulas 
establish that the distance in the disk graph between two disks 
$D,E$ which intersect
a given $I$-bundle generator $c$ with base $F$ in precisely two points equals the
distance in ${\cal A}(F)$ between the projections of 
$\partial D$ and $\partial E$ to $F$ 
up to a uniform constant not depending on $c$. In view
of Lemma \ref{iso}, this is what we want to show.

The details are as follows. 
Construct from the disk
graph ${\cal D\cal G}_0$ of $H_0$ another graph ${\cal E\cal D\cal G}_0$
with the same vertex set by adding additional edges as follows. 
If $D,E$ are two disks in $H_0$, and if up to homotopy, 
$D,E$ are disjoint from an \emph{essential} simple closed curve in $\partial H_0$, 
that is, a simple closed curve which is not homotopic to zero, 
then we connect $D,E$ by an edge in ${\cal E\cal D\cal G}_0$. 
This graph is called the \emph{electrified disk graph} of 
$H_0$ \cite{H16}. 

Let us denote by ${\cal E\cal R\cal D}(c)$ the subgraph of ${\cal E\cal D\cal G}_0$
whose vertex set consists of all disks which intersect the non-separating 
$I$-bundle generator $c$ in precisely two points. 
Lemma 4.2 and Lemma 4.1 of \cite{H16} show that the map which associates to an arc in 
the non-orientable surface $F$ the $I$-bundle over $F$ is a uniform quasi-isometry between 
the arc and curve graph of $F$ and  
${\cal E\cal R\cal D}(c)$. 
Furthermore, by Corollary 4.6 of \cite{H16}, the inclusion ${\cal E\cal D\cal R}(c)\to  
{\cal E\cal D\cal G}_0$ is a uniform quasi-isometric embedding. 
Here uniform means with constants not depending on $c$.

Let $\zeta:[0,m]\to {\cal E\cal R\cal D}(c)$ be a geodesic. Then $\zeta$ is a uniform 
quasi-geodesic in ${\cal E\cal D\cal G}_0$. 
Define the \emph{enlargement} $\zeta_2$ 
of $\zeta$ to be the edge path in ${\cal E\cal R\cal D}(c)$ obtained from $\zeta$ 
by replacing each edge $\zeta[k,k+1]$ by an edge path 
$\zeta_2[i_k,i_{k+1}]$ with the same endpoints as follows.

If the disks $\zeta(k),\zeta(k+1)$ are disjoint, then the edge path $\zeta_2[i_k,i_{k+1}]$ just
consists of the edge connecting these two points. 
Otherwise $\zeta(k),\zeta(k+1)$ intersect, but they 
are disjoint from an essential 
simple closed curve in $\partial H_0$. 
As each disk $\zeta(j)$ is an $I$-bundles over an arc $\alpha(j)$ 
in the surface $F$, this means
that there is an essential simple closed curve $\beta\subset F$ disjoint from
both $\zeta(k),\zeta(k+1)$. We refer to Lemma 4.2 of \cite{H16} for a detailed 
explanation. 

An essential subsurface of $F$ containing $\partial F$ is a component 
of $F-\xi$ where 
$\xi$ is a collection of pairwise disjoint 
mutually not freely homotopic essential non-boundary parallel 
simple closed curves in $F$. If $\zeta(k),\zeta(k+1)$ are disjoint from an essential
simple closed curve in $F$, then the subsurface $\hat X$ of $F$ \emph{filled} by 
$\zeta(k),\zeta(k+1)$, defined to be 
the intersection of all essential subsurfaces 
of $F$ which contain $\zeta(k)$, $\zeta(k+1),\partial F$,  
is not all of $F$. 

Let $X\subset \partial H_0$ be the preimage of $\hat X$ in $\partial H_0$. 
Then $X$ is an essential subsurface of $\partial H_0$ which 
contains the boundaries of the disks $\zeta(k),\zeta(k+1)$ and is invariant under 
the orientation reversing involution $\Omega$. 
No component of its boundary is diskbounding, and it contains 
$c$ as an $I$-bundle generator. Furthermore, no essential simple closed curve in 
$X$ (here essential means non-peripheral) is disjoint from all disks with boundary in $X$.
This follows from the fact that no essential simple closed curve in $X$ is disjoint
from both $\zeta(k)$ and $\zeta(k+1)$ as $\zeta(k),\zeta(k+1)$ are 
invariant under $\Omega$ and their projection to $F$ fill the projection $\hat X$ of $X$.
A subsurface $X$ of $\partial H_0$ with these properties 
is called \emph{thick} in \cite{H16}. 

The complete subgraph ${\cal E\cal D\cal G}(X)$ of ${\cal E\cal D\cal G}_0$
whose vertex set is the 
set of all disks with boundary in $X$ is an 
electrified disk graph for $X$. 
By Corollary 4.6 of \cite{H16}, its subgraph 
${\cal E\cal R\cal D}(c,X)$ 
of all disks
which intersect $c$ in precisely two points is uniformly 
quasi-isometrically embedded in the electrified 
disk graph of $X$. Note that Corollary 4.6 of \cite{H16} only states 
that this graph is uniformly quasi-convex, however Corollary 2.8
of \cite{H16} shows that indeed, the inclusion of each of these
graphs into the electrified disk graph of $X$ is a uniform
quasi-isometric embedding. Furthermore, 
by Lemma 4.2 of \cite{H16}, the graph ${\cal E\cal R\cal D}(c,X)$ 
is 4-quasi-isometric to the arc 
and curve graph of $\hat X$ where we require arcs to have endpoints
on the distinguished boundary component $c$ of $\hat X$.

If $\hat X$ is the complement of an orientation reversing simple
closed curve disjoint from $c$, then $X$ is the complement in 
$\partial H_0$ of an essential simple closed curve. In this case we 
define $\zeta_2[i_k,i_{k+1}]$ to be 
the path in ${\cal E\cal R\cal D}(c,X)$ connecting $\zeta(k)$ to 
$\zeta(k+1)$ which consists of $I$-bundles over arcs in $\hat X$ 
defined by a geodesic in the arc and curve graph of $\hat X$.
That is, from a geodesic in the arc and curve graph of $\hat X$
we construct first an edge path of at most twice the length 
with the property that among two consecutive vertices, at least one is an arc, 
and then we view this edge path as an edge path in the graph
${\cal E\cal R\cal D}(c,X)$. By Corollary 4.6 of \cite{H16}, $\zeta_2[i_k,i_{k+1}]$ is 
a uniform quasi-geodesic in ${\cal E\cal D\cal G}(X)$.
If the complement of $\hat X$ contains an orientation preserving
simple closed curve which does not bound a M\"obius band, 
then the complement of $X$ in $\partial H_0$
contains at least two disjoint simple closed curves and we define
$\zeta_2[i_k,i_{k+1}]$ to be the edge between $\zeta(k)$ and 
$\zeta(k+1)$.

The resulting edge path $\zeta_2$ in ${\cal E\cal R\cal D}(c)$ 
has the property that  
two consecutive vertices, which are disks $D,E$ intersecting
$c$ in two points, are either disjoint, or their boundaries
lie in the same proper thick $\Omega$-invariant 
subsurface $X$ of $\partial H_0$ 
containing $c$ as an $I$-bundle generator. Moreover, 
$D,E$ are connected by an edge in the graph ${\cal E\cal R\cal D}(c,X)$.
In particular, the complement of the subsurface of 
$\partial H_0$ filled by $D,E$ contains at least two disjoint essential simple closed
curves. 

Let ${\cal E\cal D\cal G}(2,\partial H_0)$ be the graph 
whose vertex set is the set of disks and where two disks are 
connected by an edge if either they are disjoint, or if they are disjoint from 
a multicurve consisting of at least two non-homotopic components.  
By Theorem 5.5 of \cite{H16}, the graph ${\cal E\cal D\cal G}(2,\partial H_0)$ is 
hyperbolic, and it is an electrification of the disk graph of $H_0$. 
This means that it has the same vertex set as 
the disk graph of $H_0$, and it is obtained from this disk graph 
by adding edges.

Theorem 5.5 of \cite{H16} also shows that 
the path $\zeta_2$ is a uniform quasi-geodesic in ${\cal E\cal D\cal G}(2,\partial H_0)$.
Namely, following Section 5 of \cite{H16}, define a simple closed curve 
$\gamma\subset \partial H_0$ to be 
\emph{admissible} if $\gamma$ is neither
diskbounding nor diskbusting. Each such curve defines a thick subsurface of 
$\partial H_0$. Write ${\cal E\cal D\cal G}(\partial H_0-\gamma)$ to denote
the electrified disk graph of $\partial H_0-\gamma$ and let ${\cal F}(\gamma)$
to be the complete subgraph of ${\cal E\cal D\cal G}(2,\partial H_0)$ whose vertex
set consists of all disks which are disjoint from $\gamma$. A disk $D\subset {\cal F}(\gamma)$
defines a vertex in ${\cal E\cal D\cal G}(\partial H_0-\gamma)$. 

Following Section 2 of \cite{H16}, define the \emph{enlargement} of 
a uniform quasi-geodesic $\eta:[0,n]\to {\cal E\cal D\cal G}_0$ with no
backtracking as follows. Assume that $\eta(j),\eta(j+1)\in 
{\cal E\cal D\cal G}(\partial H_0-\gamma)$ for some admissible simple closed
curve $\gamma$ and some $j<n$; then replace 
the edge $\eta[j,j+1]$ by a geodesic (or uniform quasi-geodesic)
in ${\cal E\cal D\cal G}(\partial H_0-\gamma)$. Note that if $\eta(j),
\eta(j+1)$ are disjoint from an essential simple closed curve in 
$\partial H_0-\gamma$, then there is an edge between 
$\eta(j),\eta(j+1)$ in ${\cal E\cal D\cal G}(\partial H_0-\gamma)$. 
Theorem 5.5 of \cite{H16} states that enlargements of uniform quasi-geodesics
in ${\cal E\cal D\cal G}_0$ are uniform quasi-geodesics in 
${\cal E\cal D\cal G}(2,\partial H_0)$.  

Now the above construction takes as input a geodesic in 
${\cal R\cal D}(c)$ and associates to it an enlargement, chosen in 
such a way that this enlargement consists  of disks whose boundaries
intersect $c$ in precisely two points. Using once more Theorem 5.5 of 
\cite{H16}, this shows that inclusion defines a quasi-isometric embedding
of the complete subgraph of ${\cal E\cal D\cal G}(2,\partial H_0)$ of disks which 
intersect $c$ in precisely two points into the graph 
${\cal E\cal D\cal G}(2,\partial H_0)$.

This construction can be iterated. In the next step, we modify the path $\zeta_2$ to 
a path $\zeta_3$ by replacing suitable edges by edge paths as follows. 
Consider two 
consecutive vertices $\zeta_2(k),\zeta_2(k+1)$ of $\zeta_2$. These are disks which 
intersect $c$ in precisely two points. If they are not disjoint, then 
there exists an essential simple closed curve $\gamma\subset F$ which is disjoint 
from both $\zeta_2(k),\zeta_2(k+1)$. If $\gamma$ is orientation preserving and does not 
bound a M\"obious band, 
then $\gamma$ has two disjoint preimages $\gamma_1,\gamma_2$ in $\partial H_0$, and 
the complement of these preimages is an $\Omega$-invariant 
thick subsurface $X$ of $\partial H_0$ containing
$c$ as an $I$-bundle generator. 
Replace $\zeta_2[k,k+1]$ by a geodesic in ${\cal E\cal D\cal G}(X)$
with the same endpoints.  This geodesic can
be chosen to be the preimage of a geodesic in the arc and curve graph of 
$F-\gamma$. 
Proceed in the same way if the complement of the subsurface of $F$ filled by
$\zeta(k)\cap F,\zeta(k+1)\cap F$ only contains orientation reversing primitive simple closed
curves.

In finitely many steps
we construct in this way a path in the graph ${\cal R\cal D}(c)$ connecting the endpoints of 
$\zeta$. Its length roughly equals the sum of the subsurface projections of 
the projection of 
its endpoints to $F$, where the sum is over all essential subsurfaces of $F$ containing the boundary
$\partial F$.  
In particular, by the distance formula in 
Corollary 6.3 of \cite{H16}, its length does not 
exceed a uniform multiple of the distance in ${\cal D\cal G}_0$ between
its endpoints. 
The statement 
also follows as by the main result of \cite{H16}, the so-called 
hierarchy paths, constructed from a geodesic in ${\cal E\cal D\cal G}_0$ in the above inductive fashion, 
 are uniform quasi-geodesics in the disk graph. 

As a consequence, taking the $I$-bundle over an arc in $F$ defines an isometry between
the arc graph of $F$ and the graph ${\cal R\cal D}(c)$, and this graph is quasi-isometrically
embedded in ${\cal D\cal G}_0$. This is what we wanted to show.
%
\end{proof}

Our goal is to use $I$-bundle generators in $\partial H_0$ to construct 
quasi-isometrically embedded euclidean planes in the disk graph of $H$.
In analogy to \cite{H19a}, we define
an \emph{$I$-bundle generator} for the spotted handlebody
$H$ to be a simple closed
curve in $\partial H$ whose  
image under the spot forgetful 
map $\Phi$ is an $I$-bundle generator in $\partial H_0$.

Let $(c_1,c_2)\subset \partial H$ be a 
pair of non-isotopic disjoint
$I$-bundle generators so that $\partial H-\{c_1\cup c_2\}$ has a connected component
which is an annulus containing the spot in its interior. 
Then up to isotopy, $\Phi(c_1)=\Phi(c_2)=c$
for an $I$-bundle generator $c$ in $H_0$.

The following construction is due to Kra; 
we refer to \cite{KLS09} for details and for some applications.
For its formulation, for a pair $(c_1,c_2)$ of disjoint $I$-bundle generators
on $\partial H$ as in the previous paragraph 
let ${\cal R\cal D}(c_1,c_2)$ be the 
complete subgraph of the disk graph ${\cal D\cal G}$ of $H$ 
whose vertex set consists of all disks
which intersect each of the curves $c_1,c_2$ in precisely two points.
Note that if $D\in {\cal R\cal D}(c_1,c_2)$ then the image of 
$D$ under the spot removing map $\Phi$ is 
contained in ${\cal R\cal D}(c)$ where $c=\Phi(c_i)$.

In the next lemma we denote by abuse of notation the map 
${\cal D\cal G}\to {\cal D\cal G}_0$ induced by the
spot forgetful map $\Phi$ again by $\Phi$. Furthermore,
for the remainder of this section we represent a disk
by its boundary, that is, we view the disk graph as the complete 
subgraph of the curve graph of $\partial H$ whose vertex set
is the set of diskbounding curves.

\begin{lemma}\label{isometricembedding}
Let $(c_1,c_2)$ be a pair of I-bundle generators bounding a punctured annulus
and let $c=\Phi(c_1)=\Phi(c_2)$. 
There exists a simplicial embedding $\iota:{\cal D\cal G}_0\to 
{\cal D\cal G}$ with the following properties.
\begin{enumerate}
\item $\Phi\circ \iota$ is the identity.
\item $\iota$ maps ${\cal R\cal D}(c)$ into
${\cal R\cal D}(c_1,c_2)$.
\end{enumerate}
\end{lemma}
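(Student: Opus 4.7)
The plan is to construct $\iota$ by lifting each simple closed curve on $\partial H_0$ to a canonical curve on $\partial H$, using the annular region cobounded by $c_1,c_2$ as a frame for threading past the spot. Let $A\subset\partial H$ be the annular component of $\partial H-(c_1\cup c_2)$ containing the spot $p$; under $\Phi$ it maps onto an annular neighborhood $A_0$ of $c$ in $\partial H_0$ whose boundary components $\Phi(c_1),\Phi(c_2)$ are freely homotopic to $c$. Outside of $A$ and $A_0$, $\Phi$ is a homeomorphism of surfaces, so the lifting ambiguity concerns only the arcs that meet the annulus.

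Given an essential disk $D\in{\cal D\cal G}_0$ with boundary $\gamma\subset\partial H_0$, I would first isotope $\gamma$ into minimal position with respect to $c$ and arrange $\gamma\cap A_0$ to consist of properly embedded \emph{horizontal} arcs, each running monotonically across $A_0$ from $\Phi(c_1)$ to $\Phi(c_2)$. To lift, declare that inside $A$ these arcs lie in the unique isotopy class rel $\partial A$ of arcs from $c_1$ to $c_2$ that do not wind around $p$; outside $A_0$, transport the representative unchanged. This yields a simple closed curve $\iota(\gamma)$ on $\partial H$ avoiding $p$, and I set $\iota(D)$ to be the essential disk in $H$ bounded by $\iota(\gamma)$, viewed as the same disk in the underlying three-manifold with the spot reinstated. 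Essentiality is automatic, since $\gamma$ is non-trivial on $\partial H_0$ and the horizontal lift is not homotopic to a loop around $p$.

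Well-definedness on isotopy classes reduces to showing that any two horizontal-arc representatives of the same class on $\partial H_0$ differ by an ambient isotopy of $\partial H$ that misses $p$. This follows because horizontal arcs in $A$ form a single isotopy class rel $\partial A$ in $A-\{p\}$, so the lift is unambiguous. Simpliciality is analogous: for disjoint $D,D'\in{\cal D\cal G}_0$, put their boundaries simultaneously in minimal position with respect to $c$ and choose mutually parallel horizontal arcs in $A_0$; lifting to $\partial H$ preserves disjointness. Properties (1) and (2) are then immediate: $\Phi\circ\iota=\operatorname{id}$ by construction, and a $\gamma$ meeting $c$ in exactly two points gives rise to exactly two horizontal arcs in $A_0$, so $\iota(\gamma)$ meets each of $c_1,c_2$ in exactly two points, placing $\iota(D)$ in ${\cal R\cal D}(c_1,c_2)$. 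The main obstacle is the well-definedness verification: one must track how the horizontal-arc convention behaves under bigon surgeries on $\partial H_0$ and confirm that these moves can always be realized on $\partial H$ without crossing $p$, which is where the specific choice of non-winding arcs inside $A$ enters essentially.
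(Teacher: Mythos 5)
Your approach — lifting curves through the annulus via a fixed ``horizontal'' arc convention — is a reasonable idea, but the well-definedness step is genuinely incomplete, and you correctly flag it as the main obstacle. The problem is that ``minimal position with respect to $c$'' does not determine a unique representative: two isotopic curves on $\partial H_0$, both in minimal position with respect to $c$, can differ outside $A_0$ by an isotopy that cannot be lifted to $\partial H - \{p\}$ directly, because the isotopy on $\partial H_0$ may sweep across the image of $p$ inside $A_0$. Saying ``transport the representative unchanged outside $A_0$'' therefore does not produce a well-defined map on isotopy classes until you verify that any two choices give isotopic lifts; the one-sentence appeal to uniqueness of the horizontal arc class in the pair of pants $A - \{p\}$ does not settle this, since the ambiguity lives in how the curve enters and exits the annulus and how bigons are created or cancelled away from it. This is exactly the subtlety that would require tracking bigon moves, and you did not carry that out. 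A similar remark applies to simpliciality: ``put their boundaries simultaneously in minimal position with respect to $c$'' requires an argument that this is compatible with the lifting convention for both curves at once.

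The paper's proof avoids this difficulty entirely with a different device. It chooses a $\rho_0$-invariant hyperbolic metric on $\partial H_0$, takes $\hat c$ to be the geodesic representative of $c$, and picks a point $p \in \hat c$ lying on \emph{no} diskbounding simple closed geodesic (possible because there are only countably many such geodesics, each meeting $\hat c$ in finitely many points). Declaring $p$ to be the spot, $\iota$ sends a diskbounding curve to its geodesic representative, now viewed as a curve in $\partial H_0 - \{p\}$. Since the geodesic representative in a hyperbolic surface is \emph{unique} and misses $p$ by the genericity of $p$, well-definedness is immediate; simpliciality follows because geodesic representatives of disjoint simple closed curves are disjoint; and $\Phi \circ \iota = \mathrm{id}$ and $\iota(\mathcal{RD}(c)) \subset \mathcal{RD}(c_1,c_2)$ are both evident from the construction. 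This is the same construction your argument is groping towards (a canonical spot on $c$ with a canonical lift), but the hyperbolic-geometry normalization makes uniqueness of the representative automatic instead of something that needs to be verified combinatorially. If you want to salvage your combinatorial approach, you would need to prove a bigon-criterion-type statement that minimal position of $\gamma$ relative to $c$, plus the horizontal-arc convention in $A$, determines the isotopy class of the lift in $\partial H - \{p\}$; that would amount to re-deriving the key point of Kra's construction (cf.\ \cite{KLS09}), and it is precisely what the hyperbolic-metric argument is designed to circumvent.
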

\begin{proof}
Note first that
there is a natural orientation reversing involution $\rho_0$ of 
$\partial H_0$ which exchanges the endpoints of the fibres of the 
interval bundle over the base $F$. This involution 
fixes $c$ and 
preserves up to isotopy each diskbounding simple closed 
curve which intersects $c$ in precisely two points. We refer to the 
discussion before Lemma \ref{iso} for this fact. 

Choose a hyperbolic metric
$g_0$ on $\partial H_0$ which is invariant under $\rho_0$
and let $\hat c$ be the geodesic representative of $c$. 
This makes sense since the geodesic representative of a simple closed curve
is simple. Choose a point 
$p\in \hat c$ not contained
in any diskbounding simple closed geodesic; this is possible since 
each diskbounding simple closed geodesic intersects $\hat c$ transversely
in finitely many points and hence the set of all points
of $\hat c$ contained in a diskbounding closed geodesic is countable.
View $p$ as a marked point on 
$\partial H_0$; then the geodesic representative
of a diskbounding curve $\alpha$ in $\partial H_0$ 
is a diskbounding curve $\iota(\alpha)$ in $\partial H_0-\{p\}$.
Via identification of a disk with its boundary, this construction defines
a simplicial 
embedding 
\[\iota:{\cal D\cal G}_0\to  
{\cal D\cal G}\] with the property that $\Phi\circ \iota$
equals the identity. 
Note that $\iota$ is simplicial and hence one-Lipschitz because
the geodesic representatives of two disjoint simple closed curves
are disjoint. 
Furthermore, we clearly have 
$\iota({\cal R\cal D}(c))
\subset {\cal R\cal D}(c_1,c_2)$. 
\end{proof}

The situation in the following discussion is illustrated in Figure A.
Let $B$ be the connected component of 
$\partial H-\{c_1,c_2\}$ containing the spot (this is a once
spotted annulus). 
Let $\Lambda$ be 
a diffeomorphism of $\partial H$
which preserves the complement of $B$ 
(and hence the boundary of $B$)
pointwise and 
which pushes the spot in $B$ one full turn around a
central loop in $B$. 
The isotopy class of $\Lambda$ 
is contained
in the kernel of the homomorphism 
${\rm Mod}(\partial H)\to {\rm Mod}(\partial H_0)$ 
induced by the spot removal map $\Phi$. 
The map $\Lambda$ extends to 
a diffeomorphism of the handlebody $H$. This can be seen as
in the case of point-pushing in a surface: 
Identify the image of $B$ under the spot removal map 
$\Phi$ with a closed annulus $A$.
Choose
a neighborhood $N$ of the punctured annulus $B$ in $H$ 
which is homeomorphic to $A\times [0,1]$, 
with one interior point removed from $A\times \{0\}$. 
Gradually undo the rotation of the marked point as one moves
towards $A\times \{1\}\cup \partial A\times [0,1]$. Therefore
the diffeomorphism $\Lambda$ generates
an infinite cyclic group of simplicial isometries of 
${\cal R\cal D}(c_1,c_2)$ which we denote again by $\Lambda$. 
With this notation, $\Phi\circ \Lambda=\Phi$. 
\begin{figure}[ht]
\begin{center}
\includegraphics[width=0.7\textwidth]{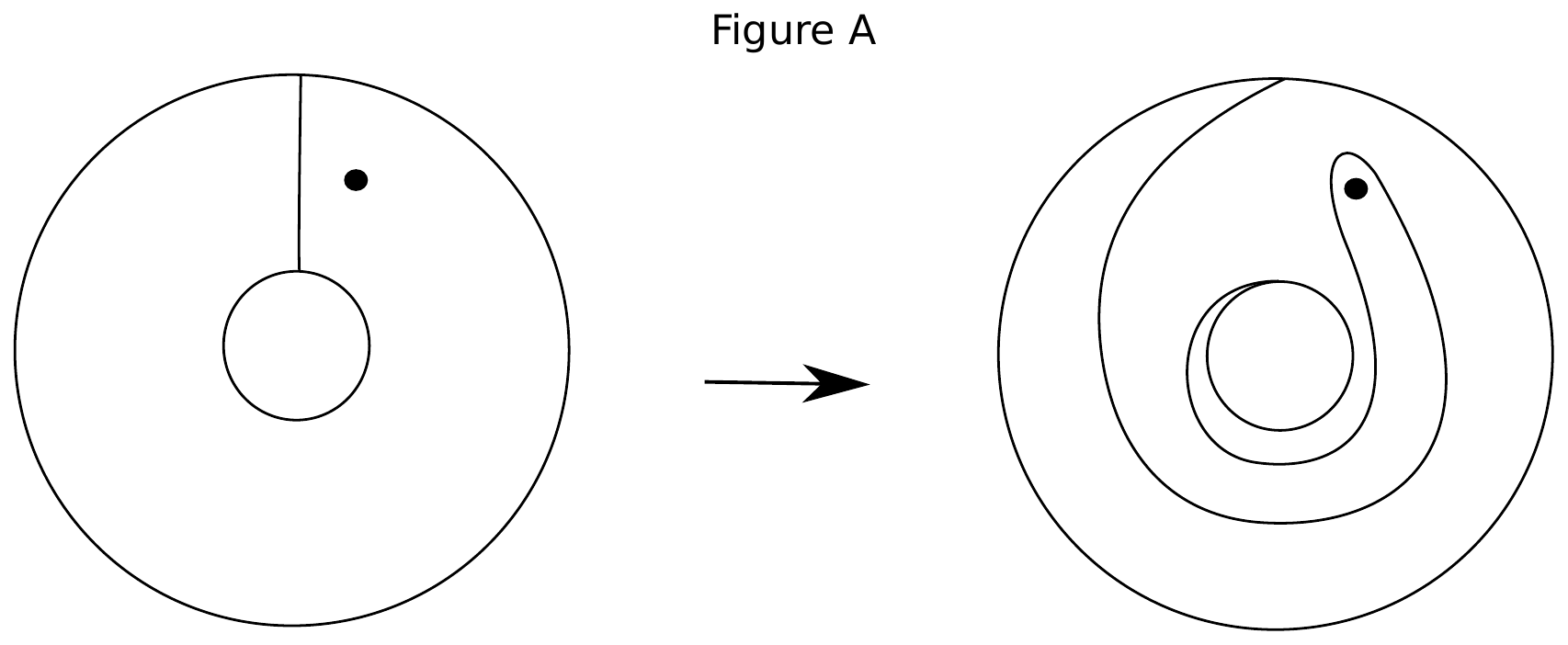} 
\end{center}
\end{figure}

Let $\Theta_0:{\cal D\cal G}_0\to {\cal R\cal D}(c)$ be
as in Lemma \ref{casewithoutspot}. 
Define
\begin{equation}\label{theta}\Theta=\Theta_0\circ \Phi:{\cal D\cal G}\to {\cal R\cal D}(c).
\end{equation}
Observe that 
$\Theta(\iota(D))=\Theta_0(D)$ for all disks $D\in {\cal D\cal G}_0$. 
This then implies that  
$\Theta(\iota(D))=D$ for all  $D\in {\cal R\cal D}(c)$. 
Furthermore, $\Theta$ is coarsely Lipschitz.
Namely, the puncture forgetful map $\Phi$ is simplicial and hence one-Lipschitz, and
the map $\Theta_0$ is a coarse Lipschitz retraction by 
Lemma \ref{casewithoutspot}. Moreover, we have
\[\Theta(\Lambda(D))=\Theta(D)\]
for all disks $D$.

Recall from Lemma \ref{iso} that 
${\cal R\cal D}(c)$ is isometric to the arc graph
${\cal A}(F)$ of $F$. 
Define a distance
$d_0$ on ${\cal R\cal D}(c)\times \mathbb{Z}$ by
\[d_0((\alpha,a),(\beta,b))=d_{{\cal R\cal D}(c)}(\alpha,\beta)+\vert a-b\vert\]
where $d_{{\cal R\cal D}(c)}$ 
denotes the distance in ${\cal R\cal D}(c)$. Let moreover
\[\Omega=\cup_k\Lambda^k\iota({\cal R\cal D}(c)),\]
equipped with the restriction of the distance function of 
${\cal D\cal G}$.

In the following lemma, the fact that the map 
$\Psi$ is well defined is part of the claim which is established
in its proof. 

\begin{lemma}\label{firststep}
The  map 
$\Psi:\Omega\to {\cal R\cal D}(c)\times \mathbb{Z}$
which maps $D\in \Lambda^k\iota ({\cal R\cal D}(c))$ to 
$\Psi(D)=(\Theta(D),k)$ is a bijective quasi-isometry.
\end{lemma}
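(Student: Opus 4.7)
The plan is to exploit the equivariance properties $\Phi\circ\iota=\mathrm{id}$, $\Phi\circ\Lambda=\Phi$, $\Theta\circ\Lambda=\Theta$, and the identity $\Theta(\iota(\alpha))=\alpha$ for $\alpha\in{\cal R\cal D}(c)$, together with the coarse Lipschitz retraction property of $\Theta$, to transfer the metric from $\Omega\subset{\cal D\cal G}$ to ${\cal R\cal D}(c)\times\mathbb{Z}$ coordinate by coordinate. Well-definedness and bijectivity are immediate once I know the slices $\Lambda^k\iota({\cal R\cal D}(c))$ are pairwise disjoint for distinct $k$: the computation $\Theta(\Lambda^k\iota(\alpha))=\Theta_0(\alpha)=\alpha$ recovers $\alpha$ from any presentation of $D$, and an equality $\Lambda^k\iota(\alpha)=\Lambda^\ell\iota(\beta)$ forces $\alpha=\beta$ under $\Phi$, whence one must only rule out $\Lambda^{k-\ell}$ fixing $\iota(\alpha)$. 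This follows from the geometry of the once-spotted annulus $B$: the boundary $\partial\iota(\alpha)$ meets $B$ in two essential arcs from $c_1$ to $c_2$, and each iteration of $\Lambda$ inserts one full twist of these arcs around the spot. The explicit inverse is $(\alpha,k)\mapsto\Lambda^k\iota(\alpha)$.

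For the upper bound $d_0(\Psi(D),\Psi(E))\le L\,d_{{\cal D\cal G}}(D,E)+L$ I would control the two coordinates separately. The first, $d_{{\cal R\cal D}(c)}(\Theta D,\Theta E)$, is bounded affinely by $d_{{\cal D\cal G}}(D,E)$ because $\Theta=\Theta_0\circ\Phi$ is coarsely Lipschitz by Lemma~\ref{casewithoutspot} combined with the $1$-Lipschitz property of $\Phi$. For $|k-\ell|$ I would introduce a coarsely Lipschitz winding function $\nu\colon{\cal D\cal G}\to\mathbb{Z}$ given by the annular subsurface projection of $\partial D$ onto $c_1$: since $c_1$ is diskbusting in $H$ (inherited from $\Phi(c_1)=c$ being diskbusting in $H_0$), $\nu$ is defined on every disk, and the Behrstock bounded projection principle gives $|\nu(D)-\nu(E)|\le 2$ for disjoint disks. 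The Birman decomposition $\Lambda=T_{c_1}T_{c_2}^{-1}$ of the point-push, together with the fact that $T_{c_2}$ acts trivially on the annular projection to $c_1$, yields $\nu(\Lambda^k\iota(\alpha))=\nu(\iota(\alpha))+k$, and combining with the first-coordinate bound produces $|k-\ell|\le L'\,d_{{\cal D\cal G}}(D,E)+L'$.

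For the lower bound $d_{{\cal D\cal G}}(D,E)\le L\,d_0(\Psi D,\Psi E)+L$ I would apply the triangle inequality
\[
d_{{\cal D\cal G}}(\Lambda^k\iota\alpha,\Lambda^\ell\iota\beta)\le d_{{\cal D\cal G}}(\Lambda^k\iota\alpha,\Lambda^k\iota\beta)+d_{{\cal D\cal G}}(\Lambda^k\iota\beta,\Lambda^\ell\iota\beta).
\]
The first summand equals $d_{{\cal D\cal G}}(\iota\alpha,\iota\beta)\le d_{{\cal R\cal D}(c)}(\alpha,\beta)$ because $\Lambda$ is a simplicial isometry, $\iota$ is $1$-Lipschitz, and Lemma~\ref{iso} identifies ${\cal R\cal D}(c)$ isometrically with its image in ${\cal D\cal G}_0$. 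The second summand equals $d_{{\cal D\cal G}}(\iota\beta,\Lambda^{\ell-k}\iota\beta)\le|\ell-k|\cdot R_0$ once a uniform bound $R_0:=\sup_\beta d_{{\cal D\cal G}}(\iota\beta,\Lambda\iota\beta)<\infty$ is available.

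The hard part is precisely this uniform bound. The curves $\partial\iota(\beta)$ and $\partial\Lambda\iota(\beta)$ coincide outside $B$ and differ inside $B$ only by a single twist around the spot, so their subsurface projections to every essential subsurface of $\partial H$ not essentially meeting $B$ agree exactly, while projections onto the finitely many topological types of subsurfaces meeting $B$ differ by a constant depending only on the topology of $H$ and not on $\beta$. Feeding this into the distance formula and hierarchy-path machinery of \cite{H16} (the analogue in $H$ of Corollary~6.3 and Theorem~5.5) then bounds $d_{{\cal D\cal G}}(\iota\beta,\Lambda\iota\beta)$ by a universal constant, producing the desired $R_0$ and completing the quasi-isometry estimate.
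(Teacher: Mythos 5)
Your overall framework is sound: you identify the right equivariance identities, you reconstruct the well-definedness and bijectivity argument correctly, and your triangle-inequality decomposition for the lower bound is exactly the right way to organize the estimate. The upper bound via an annular winding function $\nu$ and the Birman relation $\Lambda=T_{c_1}T_{c_2}^{-1}$ is a legitimate alternative to the paper's direct intersection-number count, provided one cites the standard $O(1)$-variation of the $c_1$-annular projection under $T_{c_2}$ (disjoint support). The paper instead reads off the twist parameter from geometric intersection numbers between $\Lambda^k(D)$ and $\Lambda^{-1}(D),D,\Lambda(D)$, which is more elementary and self-contained.

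The genuine gap is in your treatment of $R_0=\sup_\beta d_{\mathcal{D\mathcal G}}(\iota\beta,\Lambda\iota\beta)$. You call this ``the hard part'' and propose to bound it by comparing subsurface projections and then invoking ``the analogue in $H$ of Corollary~6.3 and Theorem~5.5'' of \cite{H16}. No such analogue is available: those results are proved for the unspotted disk graph $\mathcal{D\mathcal G}_0$, and the present paper is precisely demonstrating that the spotted disk graph $\mathcal{D\mathcal G}$ behaves fundamentally differently (it is not even hyperbolic), so importing a distance formula for $\mathcal{D\mathcal G}$ is unjustified and arguably circular. Even within your own argument, ``finitely many topological types of subsurfaces meeting $B$'' does not control the sum in any distance formula, which ranges over all such subsurfaces, not merely topological types. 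But none of this machinery is needed: the key observation you miss, and the one the paper makes via its Figure~A intersection computation (claim (3) in the paper's proof), is that $D$ and $\Lambda(D)$ can be realized \emph{disjointly}. The boundaries $\partial\iota(\beta)$ and $\partial\Lambda\iota(\beta)$ agree outside $B$, and inside the punctured annulus $B$ the two arcs differ by a single point-push around the spot and so can be isotoped off one another. Hence $R_0=1$ outright, which also gives part (4) of the paper's claim (disjoint disks in distinct slices must have indices differing by at most one) and closes the proof without any distance formula.
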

\begin{proof}
Recall that $\Theta(D)=\Theta(\Lambda^k(D))$ for all disks $D$ 
and all $k$ and that furthermore the restriction of 
$\Theta$ to $\iota ({\cal R\cal D}(c))$ is an isometry.
In particular, if $D_0,E_0$ are distinct disks
in ${\cal R\cal D}(c)$ then
$\Theta(\iota(D_0))\not=
\Theta(\iota(E_0))$ and hence $\iota(D_0)\not=
\Lambda^k(\iota(E_0))$ for all $k$.

We claim that for every disk $D\in \Omega$ 
the following hold true.
\begin{enumerate}
\item $D\not=\Lambda^k(D)$ for all $k\not=0$.
\item If $D\in \iota({\cal R\cal D}(c))$ then 
$\Lambda^kD\not\in \iota ({\cal R\cal D}(c))$ for all $k\not=0$. 
\item The disks $D$ and $\Lambda(D)$ can be realized disjointly.
\item Two disks $D\in \Lambda^k\iota({\cal R\cal D}(c)),
E\in \Lambda^\ell\iota ({\cal R\cal D}(c))$ are disjoint only if
$\vert k-\ell\vert \leq 1$.
\end{enumerate}

To show the claim 
let $D\in \Omega$ and for $k\in \mathbb{Z}$ let 
$D_k=\Lambda^k(D)$. Figure A shows that 
for $\ell\geq 1$, the disk 
$D_{k+\ell}$ has precisely $2\ell -2$ essential 
intersections with $D_k$, and these intersection points
are up to isotopy contained in the annulus $B$. 
This yields part (3) of the above claim, and 
part (4) follows from the same argument.  
Furthermore, the
twist parameter $k$ can be recovered from the geometric 
intersection numbers
between $\Lambda^k(D)$ and $\Lambda^{-1}(D),D,\Lambda(D)$.
For example, if $k\geq 2$ then these intersection numbers
equal $2k,2k-2,2k-4$, respectively, and if 
$k\leq -2$ then these intersection numbers are
$-2k-4,-2k-2,-2k$. 
This establishes
part (1) of the above claim, and part (2) follows from part (1) and the fact that
the map $\iota$ is an embedding. In particular, 
$\Omega=\sqcup_k \Lambda^k \iota({\cal R\cal D}(c))$ (disjoint union).

As a consequence, there exists a 
map $\Psi$ as claimed in the statement of the lemma, and this map is a bijection.
Now $\Omega\subset {\cal R\cal D}(c_1,c_2)$ and 
the restriction of the map $\Theta$ to ${\cal R\cal D}(c_1,c_2)$ is just
the map induced by the spot forgetful map and hence it
is one-Lipschitz. 
Part (4) of the above claim implies that 
the map $\Psi$ is two-Lipschitz.

As $\Lambda^k\iota({\cal R\cal D}(c))$ is isometric
to ${\cal A}(F)$ for all $k$, 
the inverse of $\Psi$ 
which associates to a pair 
$(D,k)\in {\cal R\cal D}(c)\times \mathbb{Z}$ 
the disk $\Lambda^k(\iota(D))$ 
is coarsely one-Lipschitz. This shows that 
indeed, the map 
$\Psi$ is a quasi-isometry. 
\end{proof}

The following proposition is the main remaining step towards
a proof of 
Theorem \ref{diskgraph}.

\begin{proposition}\label{calculate2}
There is a coarse Lipschitz retraction 
${\cal D\cal G}\to \cup_k\Lambda^k\iota({\cal R\cal D}(c))
=\Omega$.
Moreover, 
$\Omega$ is a coarsely quasi-convex
subset of ${\cal D\cal G}$.
\end{proposition}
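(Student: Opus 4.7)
The plan is to extend the bijective quasi-isometry $\Psi\colon\Omega\to {\cal R\cal D}(c)\times\mathbb{Z}$ from Lemma \ref{firststep} to a coarsely Lipschitz map $\tilde\Psi\colon{\cal D\cal G}\to {\cal R\cal D}(c)\times\mathbb{Z}$, and then set $R=\Psi^{-1}\circ\tilde\Psi$. For the first coordinate of $\tilde\Psi$ I would use $\Theta=\Theta_0\circ\Phi$ from \eqref{theta}; by Lemma \ref{casewithoutspot} and the relations $\Phi\circ\iota=\mathrm{id}$ and $\Phi\circ\Lambda=\Phi$ it is coarsely Lipschitz, $\Lambda$-invariant, and $\Theta(\Lambda^k\iota(D_0))=D_0$ for every $k\in\mathbb{Z}$ and $D_0\in{\cal R\cal D}(c)$, so it recovers the first coordinate of $\Psi$ on $\Omega$.

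For the second coordinate I would build a twist function $\sigma\colon{\cal D\cal G}\to\mathbb{Z}$ from the Masur--Minsky annular subsurface projection $\pi_{c_1}$ to the annular cover of $\partial H$ with core $c_1$. This is well defined on every disk because $c_1$ is diskbusting in $\partial H$: a disk disjoint from $c_1$ would project under $\Phi$ to a disk in $H_0$ disjoint from $c=\Phi(c_1)$, contradicting that the $I$-bundle generator $c$ is diskbusting in $\partial H_0$. Standard bounds on the annular projections of disjoint curves make $\sigma$ coarsely Lipschitz. The crucial $\Lambda$-equivariance
\[\sigma(\Lambda D)=\sigma(D)+1\]
follows from the Birman point-push formula: since $\Lambda$ pushes the spot once around a core loop of the annulus $B$ bounded by $c_1\cup c_2$, it equals $T_{c_1}T_{c_2}^{-1}$ up to sign; the factor $T_{c_2}^{-1}$ leaves $\pi_{c_1}$ invariant as $c_2$ is disjoint from $c_1$, while $T_{c_1}$ shifts $\pi_{c_1}$ by $1$ in the standard identification ${\cal C}(A_{c_1})\cong\mathbb{Z}$.

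Since $\sigma(\iota(D_0))$ need not be uniformly bounded as $D_0$ varies in ${\cal R\cal D}(c)$ (the geodesic boundary of $\iota(D_0)$ can twist arbitrarily around $\hat c$), I would correct by setting $\tau_0\colon{\cal R\cal D}(c)\to\mathbb{Z}$, $\tau_0(D_0)=\sigma(\iota(D_0))$, and
\[\tilde\sigma(D)=\sigma(D)-\tau_0(\Theta(D)).\]
Because $\iota$ is simplicial and $\sigma$ is coarsely Lipschitz, $\tau_0$ is coarsely Lipschitz on ${\cal R\cal D}(c)$; combined with the coarse Lipschitz property of $\Theta$, this makes $\tilde\sigma$ coarsely Lipschitz on ${\cal D\cal G}$. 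Combining the $\Lambda$-equivariance of $\sigma$ with $\Theta(\Lambda^k\iota(D_0))=D_0$ yields the exact identity $\tilde\sigma(\Lambda^k\iota(D_0))=\bigl(k+\tau_0(D_0)\bigr)-\tau_0(D_0)=k$, so $\tilde\sigma$ recovers the $\mathbb{Z}$-factor coordinate of $\Psi$ on $\Omega$ on the nose.

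Finally I would define $R(D)=\Lambda^{\tilde\sigma(D)}\iota(\Theta(D))\in\Omega$. By the preceding computation, $R(D)=D$ for every $D\in\Omega$, so $R$ restricts to the identity on $\Omega$. For adjacent disks $D,E\in{\cal D\cal G}$, the pairs $(\Theta(D),\tilde\sigma(D))$ and $(\Theta(E),\tilde\sigma(E))$ differ by a bounded amount in the product metric on ${\cal R\cal D}(c)\times\mathbb{Z}$, and the quasi-isometry $\Psi^{-1}$ of Lemma \ref{firststep} then gives $d_{{\cal D\cal G}}(R(D),R(E))\le C$; thus $R$ is the desired coarse Lipschitz retraction, and coarse quasi-convexity of $\Omega$ follows automatically from the existence of such a retraction in the geodesic metric space ${\cal D\cal G}$ (as noted in the paragraph preceding Lemma \ref{casewithoutspot}). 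I expect the main obstacle to be establishing the exact $\Lambda$-equivariance $\sigma(\Lambda D)=\sigma(D)+1$, which rests on the explicit Birman description of $\Lambda$ as a product of Dehn twists about the disjoint $I$-bundle generators $c_1,c_2$; once this is in place, the correction by $\tau_0\circ\Theta$ is the mechanism that turns the bare twist function $\sigma$ into one that exactly parametrizes the $\Lambda$-sectors of $\Omega$.
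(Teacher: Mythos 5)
Your proposal is correct in substance but follows a genuinely different route from the paper. For the twist coordinate, the paper does not use annular subsurface projections at all: it chooses an auxiliary hyperbolic metric on $\partial H$, puts every diskbounding curve $\beta$ into a piecewise-geodesic normal form whose arcs in the once-punctured annulus $B$ are measured against a family of explicit sample geodesic arcs $\alpha(x_1,x_2)$, and reads off a winding number $\Xi(\beta)$ from that picture. The crucial normalization in the paper is that $\Xi(\iota({\cal R\cal D}(c)))\subset[-2,2]$, achieved by choosing the reference arc $\alpha$ to lie inside a curve of $\iota({\cal R\cal D}(c))$; this is what makes the pair $(\Theta,\Xi)$ compatible with $\Psi$ from Lemma~\ref{firststep} without any correction term. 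You instead take the standard Masur--Minsky annular projection $\sigma=\pi_{c_1}$, which is well-defined because $c_1$ is diskbusting (your verification of this via $\Phi$ is the same fact the paper uses), and handle the normalization problem by subtracting $\tau_0\circ\Theta$ with $\tau_0=\sigma\circ\iota$; this is a clean and correct substitute, relying only on the off-the-shelf facts that annular projections of disjoint curves are uniformly close and that $T_{c_1}$ translates $\pi_{c_1}$ by $1$ while $T_{c_2}^{-1}$ leaves it unchanged (the Birman point-push identity $\Lambda=T_{c_1}T_{c_2}^{-1}$ that you invoke is exactly right). Your approach is more modular and avoids the hyperbolic-geometry bookkeeping of the paper, at the cost of using the annular-projection machinery as a black box. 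One small imprecision: the identities you label as ``exact'' (e.g., $\sigma(\Lambda D)=\sigma(D)+1$, $\tilde\sigma(\Lambda^k\iota(D_0))=k$, and $R|_\Omega=\mathrm{id}$) are only coarse -- $\pi_{c_1}$ takes values in a set of bounded diameter, and for a non-separating $I$-bundle generator the retraction $\Theta_0$ of Lemma~\ref{casewithoutspot} is only coarsely the identity on ${\cal R\cal D}(c)$. This does not affect the conclusion, since a coarse Lipschitz retraction only requires $d(R(D),D)\leq C$ on $\Omega$, but the phrasing should be softened. With that adjustment the argument closes, and the final deduction of coarse quasi-convexity from the existence of a coarse Lipschitz retraction is the same observation the paper makes before Lemma~\ref{casewithoutspot}.
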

\begin{proof} For the construction of the Lipschitz retraction, we take 
advantage of the fact that any free homotopy class 
on a complete hyperbolic surface of finite area can be 
represented by a unique closed geodesic.

As in the proof of 
Lemma \ref{isometricembedding}, 
let $\rho_0$ be an orientation
reversing involution of $\partial H_0$ 
which fixes the $I$-bundle generator
$c$ pointwise. 
This involution determines an involution $\rho$ of 
the complement in $\partial H$ of the interior ${\rm int}(B)$ 
of the annulus $B$ which 
exchanges the curves $c_1$ and $c_2$.  
Write as before $\Omega=\cup_k\Lambda^k\iota({\cal R\cal D}(c))$.

Choose a complete finite area hyperbolic metric on
$\partial H$ (so that the marked point becomes a puncture)
with the property that the involution $\rho$ of 
$\partial H -{\rm int}(B)$ is an isometry for this metric 
which maps the 
geodesic representative $\hat c_1$ of $c_1$ to 
the geodesic representative $\hat c_2$ of $c_2$.
This metric restricts to a hyperbolic metric on 
 the once punctured annulus $B$ with 
geodesic boundary.  
We use this hyperbolic metric to determine for each pair of points
$x_i\in \hat c_i$ $(i=1,2)$ a sample arc in $B$ 
connecting these two points as follows. 



Choose a shortest geodesic arc $\alpha$ connecting the
two boundary components of $B$. By perhaps pulling back the 
hyperbolic metric with a diffeomorphism of $B$ which preserves
the boundary of $B$ pointwise, we may
assume that $\alpha$ is contained in the
geodesic representative of one of the curves from 
$\iota({\cal R\cal D}(c))$.
Cutting $B$ open along $\alpha$
yields a once punctured right angled
rectangle $R$ with geodesic sides, where 
two distinguished sides
come from the arc $\alpha$.
For any pair of points $x_1,x_2$ 
on the remaining two sides,
choose a shortest geodesic  arc $\alpha(x_1,x_2)$ in $R$ 
connecting these two points.
Such an arc is simple, but it may not be unique. 
By convexity, $\alpha(x_1,x_2)$ 
is disjoint from $\alpha$ if its
endpoints are disjoint from the endpoints of $\alpha$. 
Note that 
as the spot of $\partial H$ is a puncture for the hyperbolic metric,
the geodesic arcs $\alpha(x_1,x_2)$ are disjoint from the spot, and 
$\alpha(x_1,x_2)$ is not necessarily a shortest arc 
in $B$ with fixed endpoints.

This construction yields for any pair of points
$x_1\in \hat c_1,x_2\in \hat c_2$ an oriented geodesic arc 
$\alpha(x_1,x_2)\subset B$ with 
endpoints $x_1,x_2$ such that any two of 
these arcs connecting distinct pairs of points on 
$\hat c_1,\hat c_2$  
intersect in at most two points. Furthermore, 
each of these arcs intersects a geodesic representative of a curve in 
$\iota({\cal R\cal D}(c))$ in at most two points. 

The geodesic arcs $\alpha(x_1,x_2)$ serve as a base marking
to measure the twisting of a diskbounding simple closed curve
relative to a simple closed curve in the set $\iota({\cal R\cal D}(c))\subset
{\cal D\cal G}$. This is reminiscent to the definition of a twist parameter
for a simple closed curve crossing through $c$ relative to a fixed marking
of the surface $\partial H_0$. As we have to measure twisting about
the puncture, we have to take care of a pair of twist parameters
about the simple closed curves $c_1,c_2$. Our strategy to this
end is to put the intersection of a simple closed diskbounding curve
$\beta$ with $\partial H-B$ into a normal form and use this normal
form and the a priori chosen arcs $\alpha(x_1,x_2)$ to determine
a twisting datum for $\beta$.
We next construct such a normal form for the intersection of 
$\beta$ with $\partial H-B$ using hyperbolic geometry.

Thus let $\beta$ be a diskbounding simple closed curve on $\partial H$.
The intersection of $\beta$ with $\partial H-{\rm int}(B)$ consists
of a non-empty collection $\zeta$ of finitely many pairwise disjoint 
simple arcs with 
endpoints on $\hat c_1,\hat c_2$. 
Each such arc is freely homotopic 
relative to $\hat c_1,\hat c_2$ to a unique geodesic
arc which meets $\hat c_1,\hat c_2$ orthogonally at its endpoints. 

We claim that the components of 
the thus defined collection 
$\hat \zeta$ of geodesic arcs are pairwise disjoint.
However, some of these arcs may have nontrivial multiplicities
as $\beta\cap (\partial H-{\rm int}(B))$ may 
contain several components which are 
homotopic relative to the boundary. To verify the claim,
double each component $X$ of the hyperbolic surface $\partial H-{\rm int}(B)$ 
along its boundary. 
The resulting, possibly disconnected,
closed hyperbolic surface $S$ admits
an isometric involution $\sigma$ preserving the components of $S$ 
whose fixed point set is precisely the 
image $C$ of the boundary 
of $\partial H-{\rm int}(B)$ in the doubled manifold. 
The double of the above collection $\zeta$ of arcs is a collection of 
simple closed curves on $S$ which are invariant under $\sigma$. 

The free homotopy classes of these closed curves are $\sigma$-invariant and 
hence the same holds true for their
geodesic representatives: Namely, if $\gamma$ is the geodesic representative
of such a free homotopy class, then $\gamma$ intersects the 
geodesic multicurve $C$ 
in precisely two points. Let $\gamma_1$ be the component of 
$\gamma-C$ of smaller length. Then 
$\gamma_1\cup \sigma(\gamma_1)$ is a simple closed curve freely homotopic to 
$\gamma$, and its length is at most the length of $\gamma$. But $\gamma$ is 
the unique simple closed curve of minimal length in its free homotopy class
and hence $\gamma=\gamma_1\cup \sigma(\gamma_1)$. Thus
$\gamma$ intersects $C$ orthogonally, and 
$\gamma\cap X$ is a component
of the arc system $\hat\zeta$. 
The claim now follows from the 
well known fact that the geodesic representative of a simple closed
multicurve on a hyperbolic surface is a simple closed multicurve. 

As a consequence of the above discussion, the order of the endpoints of the
components of $\beta-{\rm int}(B)$ 
on $\hat c_1\cup \hat c_2$ coincides with the order of the endpoints of 
the collection of geodesic arcs $\hat\zeta$ which meet $\hat c_1\cup \hat c_2$ orthogonally
at their endpoints and are 
freely homotopic to the components of 
$\beta-{\rm int}(B)$. 
This implies that 
a diskbounding simple closed curve $\beta$
on $\partial H$ can be homotoped to a curve 
$\hat \beta$ of the following form.
\begin{enumerate}
\item[(i)]
The restriction of $\hat \beta$ to $\partial H-{\rm int}(B)$ consists of a finite collection of pairwise disjoint 
geodesic arcs 
which meet $\hat c_i$ orthogonally at their endpoints. Some of these
arcs may occur more than once. 
\item[(ii)]
The restriction of $\hat \beta$ to the once punctured annulus $B$ 
consists of a finite non-empty
collection of arcs connecting $\hat c_1$ to $\hat c_2$
and perhaps a finite number of arcs which go around the puncture and return
to the same boundary component of $B$.
 Distinct such 
arcs have disjoint interiors. 
\end{enumerate}
The curve $\hat \beta$ is uniquely
determined by 
$\beta$ and the choice of the hyperbolic metric on $\partial H$ 
up to a homotopy of the components of 
$\hat \beta\cap B$ with fixed 
endpoints (note that the above construction does not determine uniquely
the intersection of $\hat \beta$ with $B$). 
This completes the construction of a normal form for a diskbounding simple
closed curve $\beta$ on $\partial H$.

The goal is to use this normal form to construct a Lipschitz retraction of 
${\cal D\cal G}$ as stated in the proposition by associating to a diskbounding
simple closed curve $\beta$ in ${\cal D\cal G}$ a pair $\Psi^{-1}(\Theta(\beta),k)$
where $\Psi$ is as in Lemma \ref{firststep}, where $\Theta$ is as in (\ref{theta}) 
and where $k$ is a twist parameter, read off from the intersection of the normal
form with the once punctured annulus $B$.
We first check compatibility of this twist parameter construction with
the twist parameter stemming from the decomposition 
$\Omega=\cup_k\Lambda^k\iota({\cal R\cal D}(c))$.

By construction of the map $\iota$, if $\beta=\iota(\beta^\prime)\in \iota({\cal R\cal D}(c))$ then
$\hat \beta\cap \partial H-{\rm int}(B)$ is just the lift of the geodesic representative of 
$\beta^\prime$ to $\partial H-{\rm int}(B)$ for the following hyperbolic metric 
on $\partial H_0-c$. Recall that the metric on $\partial H$ was chosen in such a way
that there exists an orientation reversing involution $\rho$ which maps 
$\hat c_1$ to $\hat c_2$. Cutting ${\rm int}(B)$ off $\partial H$ and 
gluing $c_1$ to $c_2$ with the isometric involution $\rho$ constructs from 
$\partial H-{\rm int}(B)$ a hyperbolic
surface which can be viewed as a hyperbolic metric on $\partial H_0$. 
Using this metric for the construction of the embedding 
$\iota:{\cal R\cal D}(c)\to {\cal R\cal D}(c_1,c_2)$, we conclude that 
the intersections with
$B$ of the 
representatives $\hat \beta$ of 
the elements $\beta\in \iota({\cal R\cal D}(c))$ are pairwise disjoint. 

Define a map 
\[\Xi:{\cal D\cal G}\to \mathbb{Z}\] as 
follows. Let $\hat \beta$ be a closed piecewise geodesic
curve with properties (i),(ii) above which is constructed from the 
simple closed diskbounding curve $\beta$. 
Let $b$ be one of the components of 
$\hat \beta\cap B$ with endpoints on $\hat c_1$ and $\hat c_2$, 
oriented in such a way
that it connects $\hat c_1$ 
to $\hat c_2$. Such a component exists since otherwise the image of 
$\beta$ under the spot removal map is homotopic to a curve
disjoint from the diskbusting curve $c$ on $\partial H_0$. 
Let $x_1,x_2$ be the endpoints of $b$ on 
$\hat c_1,\hat c_2$.

Let $a=\alpha(x_1,x_2)$; then $b,a$ are simple arcs in $B$ with the
same endpoints which intersect some core curve of the annulus
$B$ in precisely one point. Assume that $\hat c_1,\hat c_2$ 
are oriented and define
the boundary orientation of $B$. Then $b$ is homotopic with fixed
endpoints to the arc  
$\hat c_1^k\cdot a \cdot \hat c_2^\ell$ for unique $k,\ell\in \mathbb{Z}$
(read from left to right).
In other words, if we denote by $\tau_i$ the positive
Dehn twist about $\hat c_i$, viewed as a diffeomorphism of the
punctured disk $B$ with fixed boundary, then $b$ is homotopic
with fixed endpoints 
to the arc $\tau_1^k\tau_2^{-\ell} a$. 
Define $\Xi(\beta)=k$. 

Observe that although this definition depends on the choice of the
arcs $\alpha(x_1,x_2)$ and on the choice of the component $b$
of $B\cap \hat \beta$, the map $\Xi$  
is coarsely well defined. 
Namely, let $b^\prime$ be a second component of 
$\hat\beta\cap B$, with endpoints $x_1^\prime,x_2^\prime$ 
on $\hat c_1,\hat c_2$ and distinct from $b$.
Then the interior of 
$b^\prime$ is disjoint from the interior of $b$.  
In particular, if $a^\prime$ is an arc in $B$ with the 
same endpoints as $b^\prime$ 
whose interior is
disjoint from $a$, then $b^\prime$ is homotopic with fixed endpoints
to $\tau_1^{q}\tau_2^{-r}a^\prime$
for $\vert q-k\vert \leq 1, \vert r-\ell\vert \leq 1$. On the other hand, 
the arcs $a=\alpha(x_1,x_2),\alpha(x_1^\prime,x_2^\prime)$ 
do not have an essential intersection with a fixed arc connecting 
$\hat c_1$ to $\hat c_2$ and hence
$a^\prime=\tau_1^s\tau_2^{-u}\alpha(x_1^\prime,x_2^\prime)$ for some
$\vert s\vert \leq 1,\vert u\vert \leq 1$.  
This shows that 
the multiplicity  
$k^\prime$ of the curve $\hat c_1$ in the description 
of $b^\prime$ relative to $\alpha(x_1^\prime,x_2^\prime)$ 
satisfies $\vert k-k^\prime\vert \leq 2$.
The same reasoning yields that the map 
$\Xi$ is coarsely two-Lipschitz. Furthermore, we have
$\Xi (\iota({\cal R\cal D}(c)))\subset [-2,2]$. Namely, recall 
that we chose the geodesic arc $\alpha$ in the beginning of this
proof to be contained in one of the curves $\iota({\cal R\cal D}(c))$
(which is nothing else but a normalization assumption). 

To summarize, the map
\[(\Theta,\Xi):{\cal D\cal G}\to {\cal R\cal D}(c)\times \mathbb{Z}\]
is coarsely Lipschitz, and its composition 
with the inverse of the map $\Psi$ from Lemma \ref{firststep}
is a coarse Lipschitz retraction 
of ${\cal D\cal G}$ onto $\Omega$ provided that 
the map $\Xi$ maps
a point in $\Lambda^k\iota({\cal R\cal D}(c))$ 
into a uniformly bounded neighborhood of 
$k$. 

However, if $\beta_0\in \iota({\cal R\cal D}(c))$ and if 
$\beta=\Lambda^k(\beta_0)\in 
\Lambda^k\iota({\cal R\cal D}(c))$, 
then the intersections with $H-{\rm int}(B)$ of the 
representatives $\hat \beta, \hat\beta_0$ 
of $\beta,\beta_0$ constructed above
coincide. This implies that up to homotopy with 
fixed endpoints, $\hat\beta\cap B=\Lambda^k(\hat\beta_0\cap B)$. 

On the other hand, point-pushing along a simple closed
curve $\gamma$ based at $p$ 
descends to conjugation by $\gamma$
in $\pi_1(\partial H_0,p)$. Therefore the image under the map $\Lambda$
of a simple 
arc $b$ in $B$ with endpoints on the two distinct components of 
$\partial B$ is homotopic with fixed endpoints to 
$c_1bc_2$ (recall that we oriented $c_1,c_2$ so that they
define the boundary orientation of $B$). 
As $\Xi(\iota({\cal R\cal D}(c)))\subset [-2,2]$, 
it follows that $\vert \Xi(\beta)-k\vert \leq 2$.
This shows the proposition.
\end{proof}

To summarize, we obtain

\begin{corollary}\label{nothyp}
The disk graph of a handlebody $H$ of genus $g\geq 2$ with one
spot contains quasi-isometrically embedded copies of $\mathbb{R}^2$.
\end{corollary}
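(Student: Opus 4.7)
The plan is to assemble the ingredients developed in this section. By Lemma \ref{firststep}, the subset $\Omega=\cup_k \Lambda^k\iota({\cal R\cal D}(c))$ of ${\cal D\cal G}$, equipped with the restriction of the metric of ${\cal D\cal G}$, is quasi-isometric via $\Psi$ to ${\cal R\cal D}(c)\times \mathbb{Z}$ carrying the $\ell^1$-type distance $d_0$. Combined with Lemma \ref{iso}, this identifies $\Omega$ up to quasi-isometry with ${\cal A}(F)\times \mathbb{Z}$, where $F$ is the base of the $I$-bundle. Proposition \ref{calculate2} then supplies a coarse Lipschitz retraction ${\cal D\cal G}\to \Omega$, from which one reads off that the inclusion $\Omega\hookrightarrow {\cal D\cal G}$ is a quasi-isometric embedding. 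Hence it is enough to produce a quasi-isometric embedding $\mathbb{R}^2\to {\cal A}(F)\times \mathbb{Z}$.

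For this I would fix a bi-infinite quasi-geodesic $\gamma:\mathbb{Z}\to {\cal A}(F)$. Since $g\geq 2$, the base surface $F$ has enough topological complexity to admit a pseudo-Anosov diffeomorphism (or its non-orientable analogue in the non-orientable case); such a map acts on the hyperbolic graph ${\cal A}(F)$ with positive translation length, and the orbit of any vertex under the generated cyclic group is a quasi-geodesic line. The map $(n,m)\mapsto (\gamma(n),m)$ is then a quasi-isometric embedding $\mathbb{Z}^2\to {\cal A}(F)\times \mathbb{Z}$, because
\[
d_0((\gamma(n_1),m_1),(\gamma(n_2),m_2))=d_{{\cal A}(F)}(\gamma(n_1),\gamma(n_2))+|m_1-m_2|
\]
is bi-Lipschitz equivalent to $|n_1-n_2|+|m_1-m_2|$. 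Composing with the standard quasi-isometry $\mathbb{R}^2\to \mathbb{Z}^2$ and with the quasi-isometric embeddings above yields the desired quasi-isometric embedding $\mathbb{R}^2\to {\cal D\cal G}$.

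I expect no serious obstacle at this stage: the corollary is essentially a matter of reading off the consequences of Lemma \ref{firststep} and Proposition \ref{calculate2}, where the substantive work lies. The only small technical point is the existence of the quasi-geodesic $\gamma$, but this is classical, since any unbounded hyperbolic graph on which some group acts with an isometry of positive translation length contains such a quasi-geodesic line; alternatively, one can simply appeal to the known fact that ${\cal A}(F)$ has infinite diameter and construct $\gamma$ by a standard iteration argument.
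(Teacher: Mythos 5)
Your proposal is correct and follows essentially the same route as the paper: invoke Proposition \ref{calculate2} for the coarse Lipschitz retraction onto $\Omega$, Lemma \ref{firststep} for the quasi-isometry $\Omega\simeq{\cal R\cal D}(c)\times\mathbb{Z}$, Lemma \ref{iso} to identify ${\cal R\cal D}(c)$ with ${\cal A}(F)$, and then take the product of a bi-infinite (quasi-)geodesic with the $\mathbb{Z}$-factor. The only difference is that you spell out the existence of the bi-infinite quasi-geodesic in ${\cal A}(F)$ (via pseudo-Anosov orbits or a compactness/iteration argument), whereas the paper simply notes that ${\cal A}(F)$ has infinite diameter and takes this for granted.
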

\begin{proof}
A subgraph $\Gamma$ of a metric graph $G$ is uniformly quasi-isometrically 
embedded if there exists a coarsely Lipschitz retraction $G\to \Gamma$. Proposition 
\ref{calculate2} shows that for any $I$-bundle generator $c$ in $\partial H_0$, 
there is a coarse Lipschitz retraction 
of ${\cal D\cal G}$ onto its subgraph $\Omega=\cup_k\Lambda^k\iota({\cal R\cal D}(c))$, 
and by Lemma \ref{firststep}, 
$\Omega$ is quasi-isometric to the direct product 
${\cal R\cal D}(c)\times \mathbb{Z}$. Thus as 
by Lemma \ref{iso}, ${\cal R\cal D}(c)$ is quasi-isometric
to the arc graph of the base $F$ of the $I$-bundle determined by $c$ and hence has
infinite diameter, the product of any biinfinite geodesic in ${\cal R\cal D}(c)$ and 
$\mathbb{Z}$ defines a quasi-isometrically embedded $\mathbb{Z}^2$ in 
${\cal D\cal G}$.
\end{proof}

\begin{remark} In \cite{H19a} we showed that in contrast to handlebodies
without spots, the disk graph of 
a handlebody $H$ with a single spot on the boundary is \emph{not}
a quasi-convex subgraph of the curve graph of $\partial H$.
We do not know whether ${\cal D\cal G}$ contains quasi-isometrically embedded
euclidean spaces of dimension bigger than two. 
\end{remark}

\section{Once spotted doubled handlebodies}
 
In this section we consider the connected sum $M=\sharp_g S^2\times S^1$ 
of an even number $g=2n\geq 2$ of copies of $S^2\times S^1$ with one spot
(marked point).  We explain how the 
construction that led to the proof of Theorem \ref{diskgraph} 
can be used to show 
Theorem \ref{spheregraph}: The sphere graph of $M$ contains quasi-isometrically 
embedded copies of $\mathbb{R}^2$. 

Consider the double 
$M_0=\sharp_g S^2\times S^1$ of a handlebody $H_0$ of genus
$g\geq 2$ without spots. Let $M$ be the manifold $M_0$ equipped with
a marked point $p$. As before, we call $p$ a spot in $M$.
There is a natural spot removing map $\Phi:M\to M_0$.

The vertices of the \emph{sphere graph}  
${\cal S\cal G}$ of $M$
are isotopy classes of embedded spheres in $M$
which are disjoint from the spot and not isotopic into the spot. 
Isotopies are required to 
be disjoint from the spot as well. 
Two such spheres
are connected by an edge of length one if they can be realized disjointly.
Similarly, let ${\cal S\cal G}_0$ be the sphere
graph of $M_0$.

Choose an embedded oriented surface $F_0\subset M_0$ of genus $n$ 
with connected boundary such that the inclusion 
$F_0\to M_0$ 
induces an isomorphism $\pi_1(F_0)\to \pi_1(M_0)$.
We may assume that the oriented $I$-bundle $H_0$ over $F_0$
is an embedded handlebody $H_0\subset M_0$ whose 
double equals $M_0$.
Thus every embedded essential arc $\alpha$ in $F_0$ with boundary in
$\partial F_0$ determines a sphere $\Upsilon_0(\alpha)$ 
in $M_0$ as follows. The interval bundle over $\alpha$ is an embedded 
essential disk in
$H_0$, with boundary in $\partial H_0$, and we let $\Upsilon_0(\alpha)$ be the double
of this disk. By construction, the sphere 
$\Upsilon_0(\alpha)$ intersects the surface 
$F_0$ precisely in the arc $\alpha$. 
By Lemma 4.17 of \cite{HH15}, distinct arcs give rise 
to non-isotopic spheres, furthermore the map $\Upsilon_0$
preserves disjointness and hence 
$\Upsilon_0$ is 
a simplicial embedding of the arc graph 
${\cal A}(F_0)$ of $F_0$ into the sphere graph ${\cal S\cal G}_0$ of 
$M_0$.

Now mark a point $p$ on the boundary $\partial F_0$ of $F_0$ and view
the resulting spotted surface $F$ as a surface in the spotted manifold $M$. 
The \emph{arc graph} ${\cal A}(F)$ of $F$ 
is the graph whose vertices are 
isotopy classes of essential simple arcs in $F$ with
endpoints on the complement of $p$ in the boundary of $F$.
Here we exclude arcs which are homotopic with fixed endpoints
to a subarc of $\partial F$ containing the base point $p$, and 
we require that an isotopy preserves the marked point $p$ and
hence endpoints of arcs can only slide along $\partial F-\{p\}$. 
Two such arcs are connected by an edge 
if they can be realized disjointly. 
Note that ${\cal A}(F)$ is \emph{not} the arc graph of the bordered 
surface $F$ punctured at an interior point of $F$.
Associate to an arc $\alpha$ in $F$ the double $\Upsilon(\alpha)$ of 
the $I$-bundle over $\alpha$. 

The spot removal map 
$\Phi:M\to M_0$ induces a simplicial surjection ${\cal S\cal G}\to {\cal S\cal G}_0$, 
again denoted by $\Phi$ for simplicity. Similarly, if we let 
$\phi:F\to F_0$ be the map which forgets the marked point $p\in \partial F$, then 
$\phi$ induces a simplicial surjection ${\cal A}(F)\to {\cal A}(F_0)$, denoted as well by
$\phi$. We then obtain a commutative diagram
\begin{equation}\label{diagram}
\begin{tikzcd}
{\cal A}(F)\arrow{r}{\phi} \arrow{d}{\Upsilon} & {\cal A}(F_0) \arrow{d}{\Upsilon_0}\\
{\cal S\cal G} \arrow{r}{\Phi} & {\cal S\cal G}_0
\end{tikzcd}
\end{equation}

Similar to the case of the handlebody $M_0$ without spots and the map
$\Upsilon_0$, we obtain

\begin{lemma}\label{injective}
The map $\Upsilon$  is a simplicial embedding of the
arc graph ${\cal A}(F)$ into the sphere graph. 
\end{lemma}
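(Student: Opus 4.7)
The plan splits into verifying simpliciality (edge preservation) and injectivity on vertices.

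For simpliciality, use the $I$-bundle structure $H = H_0 = F_0 \times I$ with the marked point $p$ lying in $\partial F \subset \partial H$. For any arc $\alpha$ in $F$ with endpoints in $\partial F \setminus \{p\}$, the $I$-bundle $\alpha \times I$ is an embedded disk in $H$ whose boundary avoids $p$, and doubling yields an embedded sphere in $M$ disjoint from $p$; disjoint arcs produce disjoint disks in $H$ and hence disjoint spheres in $M$. An isotopy of arcs in $F$ fixing $p$ produces an ambient isotopy of the corresponding spheres in $M$ missing $p$. Hence $\Upsilon$ is a well-defined simplicial map.

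For injectivity on vertices, suppose $\Upsilon(\alpha) = \Upsilon(\beta)$ in ${\cal S\cal G}$. Chasing through the commutative diagram~(\ref{diagram}), we obtain $\Upsilon_0 \phi(\alpha) = \Phi \Upsilon(\alpha) = \Phi \Upsilon(\beta) = \Upsilon_0 \phi(\beta)$ in ${\cal S\cal G}_0$, so by the injectivity of $\Upsilon_0$ (Lemma 4.17 of \cite{HH15}) we conclude that $\phi(\alpha) = \phi(\beta)$ in ${\cal A}(F_0)$. It remains to show that $\alpha$ and $\beta$ coincide in the finer graph ${\cal A}(F)$, that is, that they are isotopic in $F$ via an isotopy fixing $p$.

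The key geometric observation is that $\Upsilon(\alpha) \cap F$ equals the arc $\alpha \times \{1/2\} = \alpha$ set-theoretically: the mirror disk in the opposite copy of the handlebody contributes to the intersection with $F$ only at $\partial F$, which is absorbed into the endpoints of $\alpha$. Given a sphere isotopy $\{S_t\}_{t\in[0,1]}$ from $\Upsilon(\alpha)$ to $\Upsilon(\beta)$ in $M$ disjoint from $p$, perturb to arrange that $S_t$ is transverse to $F$ except at finitely many critical times. Between these critical times, $S_t \cap F$ is a properly embedded $1$-submanifold of $F$ evolving by isotopy, and at each critical time it undergoes a Morse-type modification (birth or death of a circle component, a saddle between components, or a bigon move with $\partial F$). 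Because $p \notin S_t$ for every $t$, every such modification and every sliding of an endpoint occurs within $F \setminus \{p\}$ (respectively $\partial F \setminus \{p\}$). Tracking the essential arc class in ${\cal A}(F)$ through this family, starting from $\alpha$ at $t=0$ and ending at $\beta$ at $t=1$, yields the required isotopy, so $\alpha = \beta$ in ${\cal A}(F)$.

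The main technical hurdle I expect is formalizing the tracking of the essential arc through these Morse-type modifications: at intermediate times $S_t \cap F$ may contain several arcs and closed curves, and one must verify via standard innermost-disk and outermost-bigon surgery arguments, in the spirit of the sphere system classification of Laudenbach \cite{L74}, that a unique essential arc component persists whose isotopy class in ${\cal A}(F)$ is locally constant in $t$. The additional bookkeeping needed for the marked case is minimal, since the spot $p$ lies off of $S_t$ throughout and therefore every surgery can be performed within $F \setminus \{p\}$.
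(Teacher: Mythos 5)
The reduction in your second paragraph is correct and matches the paper: chase diagram~(\ref{diagram}) and invoke injectivity of $\Upsilon_0$ to reduce to the case $\phi(\alpha)=\phi(\beta)$, i.e. $\alpha$ and $\beta$ differ only by twisting at the marked point $p$ on $\partial F$. But the mechanism you propose for finishing — tracking ``the'' essential arc class of $S_t\cap F$ through a generic one-parameter family of spheres — has a real gap. For an arbitrary intermediate sphere $S_t$, the intersection $S_t\cap F$ typically contains several essential arcs lying in different isotopy classes of ${\cal A}(F)$, and at a Morse/saddle event the essential arc class of a chosen component is \emph{not} locally constant: a saddle move can turn one essential arc into another in a different isotopy class. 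There is therefore no ``unique essential arc component'' to track, and the assertion that its class is locally constant in $t$ is precisely what would have to be proved. This is not a bookkeeping issue; in the unmarked case this is the content of the normal-form machinery of \cite{HH15} (minimal and $A$-tight minimal position), which the paper imports in Lemma~\ref{minimal2} and Proposition~\ref{next1} and which only produces a \emph{coarsely} well-defined map $\Theta_0:{\cal S\cal G}_0\to{\cal A}(F_0)$. So the arc read off from $S_t\cap F$ does not give a canonical element of ${\cal A}(F)$ even at a fixed time, let alone through a family, and the ``standard innermost-disk and outermost-bigon'' arguments you cite do not by themselves control this.

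The paper instead exploits the reduced situation much more directly. Since $\phi(\alpha)=\phi(\beta)$, one has $\beta=T^k(\alpha)$ for some $k$, where $T$ is the half Dehn twist sliding an endpoint of $\alpha$ across $p$, and $\Upsilon(T^2(\alpha))$ is obtained from $\Upsilon(\alpha)$ by point-pushing along $\partial F$. One then checks by an explicit model (as in the proof of Lemma~\ref{firststep}) that $\Upsilon(T^k(\alpha))$ and $\Upsilon(\alpha)$ have $k-1$ essential intersection circles, with a disk component on one sphere together with a disk component on the other enclosing the spot; these intersection counts recover $k$ and rule out an isotopy between $\Upsilon(\alpha)$ and $\Upsilon(T^k(\alpha))$ for $k\neq 0$. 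This is both more elementary and more robust than trying to propagate an arc class through an isotopy of spheres, and it avoids any appeal to a normal-form theory for $S\cap F$. If you want to salvage your approach, you would need to first establish a genuine normal-form/minimal-position theory for one-parameter families of spheres relative to $F$, which is substantially harder than the lemma itself.
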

\begin{proof} We have to show that the map $\Upsilon$ is injective.
As $\Upsilon_0$ is injective and 
as the diagram (\ref{diagram}) commutes, it suffices to 
show the following. Let $\alpha\not=\beta\in {\cal A}(F)$ be such that
$\phi(\alpha)=\phi(\beta)$; then $\Upsilon(\alpha)\not=\Upsilon(\beta)$.

Now $\phi(\alpha)=\phi(\beta)$ means that up to exchanging $\alpha$ and 
$\beta$, there exists a number $k>0$ such that $\beta$ can be obtained from
$\alpha$ by $k$ half Dehn twists about the boundary $\partial F$ of $F$. 
Here the half Dehn twist $T(\alpha)$ of $\alpha$ is defined as follows. 

The orientation of 
$F$ induces a boundary orientation for $\partial F$ which in turn induces an 
orientation on $\partial F-\{p\}$. With respect to the
order defined by this 
orientation, let $x$ be the bigger of the two endpoints $x,y$ of $\alpha$. 
Slide $x$ across $p$ to obtain a new arc $T(\alpha)$, 
with endpoints $x^\prime,y$.
This arc is not homotopic to $\alpha$. To see this it suffices to show that 
the double $DT(\alpha)$ 
of $T(\alpha)$ in  the double $DF$ of $F$ 
(which is a surface with one puncture) is not freely
homotopic to the double $D(\alpha)$ of 
$\alpha$. This follows since 
$D(\alpha)$ and $DT(\alpha)$ can be homotoped in such a way that
they bound a once punctured annulus in $DF$.

The same reasoning also shows that the sphere $\Upsilon(T(\alpha))$ is not homotopic
to the sphere $\Upsilon(\alpha)$. Namely, 
let $\chi\subset \partial F\cup \{p\}$ be the oriented embedded arc connecting the intersection
point $x$ of $\alpha$ with $\partial F$ to the point $x^\prime$. 
This arc contains $p$ in its interior. Then 
the sphere $\Upsilon(T(\alpha))$ is a connected sum of the sphere
$\Upsilon(\alpha)$ with the boundary of a punctured ball which is a thickening of $\chi$.
Thus $\Upsilon(\alpha)$ and $\Upsilon(T(\alpha))$ can be isotoped in such a way
that they bound a subset of $M$ homeomorphic to the complement of an interior point of 
$S^2\times [0,1]$.

The above construction, applied to the sphere $\Upsilon(T(\alpha))$
instead of the sphere $\Upsilon(\alpha)$ and where the point $y$ takes on the role of
the point $x$ in the above discussion, 
shows that 
$\Upsilon(T^2(\alpha))$ is obtained
from $\Upsilon(\alpha)$ by point-pushing along the oriented 
loop $\partial F$ with basepoint $p$. This is a diffeomorphism of $M$ which leaves 
the complement of a small tubular neighborhood of $\partial F$ pointwise fixed and pushes
the basepoint $p$ along $\partial F$. 
As in the proof of Lemma \ref{firststep}, this argument
can be iterated. It shows that the sphere $\Upsilon(T^k(\alpha))$ intersects
the sphere $\Upsilon(\alpha)$ in $k-1$ intersection
circles. These circles are essential since they cut both $\Upsilon(T^k(\alpha))$ and 
$\Upsilon(\alpha)$ into
two disks and $k-2$ annuli, where a disk component of $T^k(\alpha)-T(\alpha)$ 
bounds together with a disk component of $T(\alpha)-T^k(\alpha)$ an embedded
sphere enclosing the spot. 
Invoking the proof of 
Lemma \ref{firststep}, we conclude that    
indeed, for $k\not=\ell$, 
$\Upsilon(T^k(\alpha))$ is not homotopic to $\Upsilon(T^\ell(\alpha))$. 

We showed so far that the map $\Upsilon$ is injective. To complete the proof of
the lemma, it suffices to observe that disjoint arcs are mapped to disjoint 
spheres. But this is immediate from the construction.
\end{proof}

Proposition 4.18 of \cite{HH15} shows that there is a one-Lipschitz
retraction 
\[\Psi_0:{\cal S\cal G}_0\to\Upsilon_0({\cal A}(F_0))\]
which is of the form $\Psi_0=\Upsilon_0\circ \Theta_0$ 
(read from right to left) 
where $\Theta_0:{\cal S\cal G}_0\to {\cal A}(F_0)$ 
is a one-Lipschitz map. 
In particular, $\Upsilon_0({\cal A}(F_0))$ is a quasi-isometrically embedded 
subgraph of ${\cal S\cal G}_0$ which is quasi-isometric to ${\cal A}(F_0)$. 
Our goal is to show that there also is a 
coarse Lipschitz retraction
of ${\cal S\cal G}$ onto $\Upsilon({\cal A}(F))$ 
of the form $\Psi=\Theta\circ \Upsilon$ 
where $\Theta:{\cal S\cal G}\to {\cal A}(F)$ is a coarse Lipschitz map. 
This then yields 
Theorem \ref{spheregraph} from the introduction.

To construct the map $\Theta$ we use the method
from \cite{HH15}.  We next explain how this 
method can be adapted to our needs.

Let as before $F\subset M$ be an embedded oriented surface with 
connected boundary
$\partial F$ so that $M$ is 
the double of the trivial $I$-bundle over $F$. 
We assume that the marked point $p$ is contained in the
boundary $\partial F$ of $F$. Furthermore, we 
assume that  the boundary $\partial F$ of $F$ is a smoothly  embedded 
circle in $M\cup \{p\}$ (that is, an embedded compact one-dimensional submanifold). 
We use the marked point $p$ as the basepoint for the fundamental group
of $M$. Then $\partial F$ equipped with its boundary orientation defines a homotopy
class $\beta\in \pi_1(M,p)=\pi_1(F,p)={\cal F}_{2g}$ (the free group in
$2g$ generators). Since $\beta$ is the oriented boundary curve of $F$, it is an iterated
commutator in a standard set of generators of ${\cal F}_{2g}$ and hence 
$\beta$ is not contained in any free factor (Whitehead graphs are a convenient tool to 
verify this fact). Thus 
$\partial F$ intersects every sphere in $M$. Namely, for any given sphere $S$ in $M$,
the subgroup of $\pi_1(M,p)$ of all homotopy classes of loops which 
do not intersect $S$ is a proper free factor of $\pi_1(M,p)$.

As in \cite{HH15} and similar to the construction in Lemma \ref{casewithoutspot}, 
the strategy is to associate to a sphere $S$ in $M$ a component of the intersection 
$F\cap S$. However, unlike in the case of curves on surfaces, there is no suitable normal form for
intersections of spheres with the surface $F$, 
and the main work in \cite{HH15} consists of overcoming this difficulty
by introducing a relative normal form which allows to associate to a sphere in $M_0$ an 
intersection arc with $F_0$ so that the resulting map ${\cal S\cal G}_0\to {\cal A}(F_0)$ 
is one-Lipschitz.

For the remainder of this section we outline the main steps in this construction, adapted
to the sphere graph ${\cal S\cal G}$ of $M$ and the arc graph ${\cal A}(F)$ of $F$. 
This requires modifying spheres with isotopies
not crossing through $p$,  
and modifying the surface $F$ with 
homotopies leaving the boundary $\partial F$ pointwise fixed. 

For convenience, we record some definitions from \cite{HH15} (the following combines Definition 4.7 
and Definition 4.9 of \cite{HH15}). 

\begin{definition}\label{minimal}
Let $\Sigma$
be a sphere or a sphere system. 
\begin{enumerate}
\item  $\partial F$ \emph{intersects $\Sigma$ minimally} if $\partial F$ intersects $\Sigma$ transversely
and if no component of $\partial F-\Sigma$ not containing the basepoint $p$ is homotopic with fixed
endpoints into $\Sigma$.
\item $F$ \emph{is in minimal position with respect to $\Sigma$} if $\partial F$ intersects $\Sigma$ minimally and 
  if moreover each component of $\Sigma\cap F$ is a properly embedded arc
  which either is essential or homotopic with fixed endpoints
 to a subarc of $\partial F$ containing the marked point. 
\end{enumerate}
\end{definition}

A version of the easy Lemma 4.6 of \cite{HH15} states that any closed curve containing the basepoint 
can be put into minimal position relative to a sphere system $\Sigma$
as defined in the first part of Definition \ref{minimal}.
The following is a version of Lemma 4.12 of \cite{HH15}. For its formulation, call a sphere
system $\Sigma$ \emph{simple} if it decomposes $M$ into a simply connected components.

\begin{lemma}\label{minimal2}
Let $\Sigma$ be a simple sphere system in $M$. 
Suppose that $F$ is in minimal position with respect to $\Sigma$.
Let $\sigma^\prime$ be an embedded sphere disjoint from $\Sigma$ and let $\Sigma^\prime$
be a simple sphere system obtained from $\Sigma$ by either adding $\sigma^\prime$, or removing
one sphere $\sigma\in \Sigma$. Then $F$ can be homotoped leaving $p$ fixed to a surface $F^\prime$
which is in minimal position with respect to $\Sigma^\prime$. 
\end{lemma}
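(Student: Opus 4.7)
The plan is to follow the proof of Lemma 4.12 of \cite{HH15}, which handles the analogous statement for $M_0$ and $F_0$ without a marked point, and to adapt each step so that every homotopy is constant at $p$. Throughout, two features of Definition \ref{minimal} must be tracked simultaneously: transversality and minimality of $\partial F \cap \Sigma'$ relative to $p$, and the condition that each component of $F \cap \Sigma'$ is either essential in $F$ or homotopic with fixed endpoints to a subarc of $\partial F$ containing $p$. The two cases, removal of a sphere and addition of a sphere, are handled separately.

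For the removal case, set $\Sigma' = \Sigma - \sigma$. Since intersections of $F$ with $\Sigma'$ are a subset of intersections of $F$ with $\Sigma$, every component of $F \cap \Sigma'$ inherits the dichotomy (essential in $F$, or homotopic with fixed endpoints to a subarc of $\partial F$ through $p$) from the $F$--$\Sigma$ minimal position. The only possible failure is that two intersection arcs of $\partial F$ with a sphere of $\Sigma'$ which previously lay in different components of $\partial F - \Sigma$ may now, after deleting $\partial F \cap \sigma$, lie in a single component of $\partial F - \Sigma'$ not containing $p$ which bounds an innermost disk in $\Sigma'$ together with a subarc of that sphere. In that situation, one pushes $F$ across the innermost disk using the analogue of the proof of Lemma 4.6 of \cite{HH15}, reducing $|\partial F \cap \Sigma'|$ by two while keeping $p$ fixed; since this move also does not introduce inessential new components of $F \cap \Sigma'$ other than those homotopic into $\partial F$ through $p$, finitely many iterations produce the desired $F'$.

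For the addition case, set $\Sigma' = \Sigma \cup \{\sigma'\}$. Here one first homotopes $\partial F$ rel $p$ to put it into minimal position with $\sigma'$, exactly as in Lemma 4.6 of \cite{HH15}; the fact that $\partial F$ crosses $\sigma'$ at all follows from the discussion before Lemma \ref{injective} that $\partial F$ meets every embedded sphere in $M$. Since $\sigma'$ is disjoint from $\Sigma$, this homotopy can be chosen to be supported in a small neighborhood of $\sigma'$ and hence does not disturb $\partial F \cap \Sigma$ or $F \cap \Sigma$. Next, put $F \cap \sigma'$ into the required arc form: any component of $F \cap \sigma'$ which is neither essential in $F$ nor homotopic with fixed endpoints to a subarc of $\partial F$ through $p$ co-bounds, with an arc on $\partial F$ avoiding $p$, a disk in $F$; pairing this disk with an innermost disk cut off in $\sigma'$ gives an embedded bigon, and a surgery of $F$ across this bigon removes the offending arc without moving $p$ and without affecting the components of $F \cap \Sigma$. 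Iterating finitely many times yields $F'$ in minimal position with $\Sigma'$.

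The principal obstacle in adapting \cite{HH15} is that the standard innermost-disk reductions in both cases move arcs of $\partial F$ along $\partial F$, and one must ensure that no such move drags an intersection point across $p$; equivalently, one must verify that in each reduction step the relevant subarc of $\partial F$ can be chosen to avoid $p$. This is exactly why the minimal position definition has been weakened to permit arcs and subarcs of $\partial F$ that contain $p$: those configurations are declared admissible and simply not reduced, which is consistent with the framework because the sphere $\sigma' \in \Sigma'$ and the resulting arcs are still disjoint from $p$ by construction. Once this bookkeeping is carried out, the combinatorial innermost-disk argument of Lemma 4.12 of \cite{HH15} goes through verbatim.
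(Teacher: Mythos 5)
Your overall strategy is the right one---adapt Lemma 4.12 of \cite{HH15} while keeping $p$ fixed---and you correctly identify that the weakened minimal-position definition is what makes the basepoint bookkeeping tractable. However, the technical heart of your argument does not match the paper's, and as written it has a gap.

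For removal, the paper (following \cite{HH15}) simply observes that deleting a sphere preserves minimal position: a component of $\partial F - \Sigma'$ is a union of components of $\partial F - \Sigma$, and in the setting of a simple sphere system no merging of admissible pieces produces an arc that is homotopic into the remaining spheres. Your concern about a new innermost disk appearing after merging is not an issue the paper needs to address, and introducing additional surgeries there risks breaking the $F\cap\Sigma'$ condition you are trying to preserve.

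For addition, the paper does not perform bigon surgeries between $F$ and $\sigma'$. Instead it passes to the complement $W_\Sigma$ of $\Sigma$, decomposes $F \cap W_\Sigma$ into polygonal disks $P_i$ whose sides alternate between subarcs of $\partial F$ and arcs in $\Sigma$ (with at most one bigon, the one containing $p$), and then contracts each $P_i$ to a ribbon tree $T(P_i)$. Because ribbon trees are one-dimensional, they can be homotoped rel endpoints on $\partial W_\Sigma$ to minimal position with $\sigma'$; thickening back recovers the homotopy of $F$. Your replacement of this by ``pair the disk in $F$ with an innermost disk cut off in $\sigma'$ to get an embedded bigon'' is not a well-defined move here: $a = F\cap\sigma'$ is an arc, not a closed curve, so $a$ alone bounds neither an innermost disk of $\sigma'$ nor a bigon between the two surfaces. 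More importantly, the disk $D_1 \subset F$ cut off by $a$ and the arc $b\subset\partial F$ need not be disjoint from $\Sigma$---it can contain arcs of $F\cap\Sigma$ with endpoints on $b$---so the claim that the surgery ``does not affect the components of $F\cap\Sigma$'' is unjustified. Precisely this difficulty is what the ribbon-tree argument is designed to circumvent, by working entirely inside $W_\Sigma$ so that $F\cap\Sigma$ is frozen throughout. Without that step (or a substitute with the same effect), the proof does not go through.
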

\begin{proof}
As in the proof of Lemma 4.12 of \cite{HH15}, removing a sphere preserves minimal position, so only
the case of adding a sphere has to be considered. 

Thus let $\Sigma$ be a simple sphere system and let $\sigma^\prime$
be a sphere disjoint from $\Sigma$.
Assume that $F$ is in minimal position with respect to $\Sigma$. Let $W_\Sigma$ be the complement of 
$\Sigma$ in $M$, that is, $W_\Sigma$ is a compact (possibly disconnected) manifold whose boundary
consists of $2k$ boundary spheres $\sigma_1^+,\sigma_1^-,\cdots,\sigma_k^+,\sigma_k^-$. The boundary
spheres $\sigma_i^+$ and $\sigma_i^-$ correspond to the two sides of a sphere $\sigma_i\in \Sigma$. 
The surface $F$ intersects $W_\Sigma$ in a collection of embedded surfaces with boundaries. 
Each such surface is a polygonal disk $P_i$ $(i=1,\dots,m)$. The sides of each such polygon 
alternate between subarcs of $\partial F$ and arcs contained in $\Sigma$. There is at most one bigon,
that is, a polygon with two sides, and this polygon then contains the point $p$ in one of its sides. 
Each rectangle, if any, is homotopic 
into $\partial F$. 

The proof of Lemma 4.12 of \cite{HH15} now proceeds by studying the intersection of each polygonal 
component of $F-\Sigma$ with the sphere $\sigma^\prime$. This is done by contracting
each such polygonal
component $P$ to a ribbon tree $T(P)$ 
in such a way that the boundary components in $\Sigma$ are contracted to 
single points in $T(P)$. If $P$ is not a rectangle or bigon, then
$T(P)$ has a single vertex which is not univalent. As such ribbon trees are 
one-dimensional objects, they can be homotoped with fixed endpoints on $\partial W_\Sigma$ to trees
which are in minimal position with respect to $\sigma^\prime$. 
This construction applies without change to rectangles
and perhaps the bigon which 
can be represented by
an interval with one endpoint at $p$ and the second endpoint on a component of $\Sigma$.  
We refer to the proof of Lemma 4.12 of \cite{HH15} for details. No adjustment of the argument is 
necessary. 
 \end{proof}

 The above construction is only valid for simple sphere systems $\Sigma$
 and not for individual spheres. Furthermore, it is known that 
the arc system on $F\cap \Sigma$ obtained by putting $F$ into 
minimal position with respect to $\Sigma$ is not uniquely determined
by $\Sigma$.  To overcome this difficulty, the work of \cite{HH15} uses as an auxiliary datum 
a maximal system $A_0$ of pairwise disjoint essential 
arcs on the surface $F_0$. Here maximal means
that any arc which is disjoint from $A_0$ is contained in 
$A_0$. The system $A_0$ then  \emph{binds} $F_0$, that is,
$F-A_0$ is a union of topological disks. 
Furthermore, $\partial F_0$  and each arc $\alpha\in A_0$ is equipped with an orientation. 

Choose an arc system $A$ for $F$ which binds $F$.
If $F\subset M$ is in minimal position with respect to $\Sigma$, then a homotopy assures that 
no arc from the arc system $A$ intersects a component of $F-\Sigma$ which is a rectangle or 
a bigon. Then Lemma 4.12 of \cite{HH15} and its proof applies without modification and shows that
with a homotopy, $F$ can be put into normal form
with respect to the arc system $A$, called
\emph{$A$-tight minimal position} with respect to $\Sigma$. 
This then yields the statement of Lemma 4.16 of \cite{HH15}:
if $F$ is in $A$-tight minimal position with respect to the simple sphere system $\Sigma$, 
then the binding arc system $\Sigma \cap F$ is determined by 
$\Sigma$. In particular, two distinct spheres from $\Sigma$ intersect $F$ in disjoint essential arcs.
There may in addition be inessential arcs, that is, arcs which are homotopic with
fixed endpoints to a subsegment of $\partial F$ containing the basepoint $p$, 
but these will be unimportant for our purpose.

Now let $\sigma$ be 
an essential sphere 
in $M$. Let $\Sigma$ be a simple sphere system in $M$ 
containing $\sigma$ as a component. 
We put $F$ into $A$-tight minimal position with respect to $\Sigma$. 
Then $\sigma\cap F$ consists of a non-empty collection of essential arcs
and perhaps some additional non-essential arcs. 
Choose one of the essential 
intersection arcs $\alpha$ and define $\Theta(\sigma)=\alpha$. 
As in \cite{HH15} and Proposition \ref{calculate2} we now obtain

\begin{proposition}\label{next1}
The map $\Theta$ is a coarsely Lipschitz map. For each arc $\alpha\in {\cal A}(F)$, 
we have $\Theta(\Upsilon(\alpha))=\alpha$. 
As a consequence, 
if $g=2n$ is even then the sphere graph ${\cal S\cal G}$ 
of $M$ contains
quasi-isometrically embedded copies of $\mathbb{R}^2$.
\end{proposition}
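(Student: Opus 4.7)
The plan is to argue in parallel with Proposition \ref{calculate2}. I will first dispatch the two easy parts, the identity $\Theta\circ \Upsilon = \mathrm{id}$ and the coarse Lipschitz property of $\Theta$, and then deduce the existence of a quasi-isometrically embedded $\mathbb{R}^2$ in ${\cal S\cal G}$.

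First, $\Theta(\Upsilon(\alpha))=\alpha$ follows directly from the construction: $\Upsilon(\alpha)$ is the double of the interval bundle over $\alpha\subset F$, so $\Upsilon(\alpha)\cap F$ contains $\alpha$ as an essential arc. Embedding $\Upsilon(\alpha)$ into a simple sphere system $\Sigma$ and homotoping $F$ into $A$-tight minimal position with respect to $\Sigma$ via Lemma \ref{minimal2} keeps $\alpha$ as the essential intersection arc, so the recipe defining $\Theta$ selects it. For coarse Lipschitzness, given two disjoint spheres $\sigma,\sigma'\in{\cal S\cal G}$, I extend them to a simple sphere system $\Sigma$, put $F$ in $A$-tight minimal position with respect to $\Sigma$, and invoke the analog of Lemma 4.16 of \cite{HH15} recorded in the discussion after Lemma \ref{minimal2}: the essential components of $\Sigma\cap F$ coming from distinct spheres of $\Sigma$ are pairwise disjoint. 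Thus $\Theta(\sigma)$ and $\Theta(\sigma')$ are disjoint essential arcs in ${\cal A}(F)$, giving $d_{{\cal A}(F)}(\Theta(\sigma),\Theta(\sigma'))\leq 1$; the freedom in choosing among the essential intersection arcs absorbs into a bounded additive constant.

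For the final claim, the composition $\Upsilon\circ \Theta$ is a coarse Lipschitz retraction of ${\cal S\cal G}$ onto $\Upsilon({\cal A}(F))$, so the latter is quasi-isometrically embedded and $\Upsilon:{\cal A}(F)\to {\cal S\cal G}$ is itself a quasi-isometric embedding. It therefore suffices to construct a quasi-isometrically embedded $\mathbb{Z}^2$ in ${\cal A}(F)$, and here I mimic Lemma \ref{firststep} and Proposition \ref{calculate2}. The half Dehn twist $T$ of Lemma \ref{injective} preserves the fibres of the one-Lipschitz surjection $\phi:{\cal A}(F)\to {\cal A}(F_0)$, and $T^2$ is induced by the point-pushing diffeomorphism of $M$ along the oriented loop $\partial F$, hence acts isometrically on both ${\cal A}(F)$ and, via $\Upsilon$, on ${\cal S\cal G}$. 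Using a section $\iota:{\cal A}(F_0)\to{\cal A}(F)$ obtained analogously to Lemma \ref{isometricembedding} (picking a hyperbolic metric on $F$ with geodesic $\partial F$ and lifting geodesic representatives) together with a biinfinite geodesic $\gamma:\mathbb{Z}\to{\cal A}(F_0)$ in the hyperbolic graph ${\cal A}(F_0)$, I claim that the map $(n,k)\mapsto T^{2k}(\iota(\gamma(n)))$ is the desired QI embedding $\mathbb{Z}^2\hookrightarrow{\cal A}(F)$.

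The upper Lipschitz bound is immediate from isometricity of $T^2$ and the fact that $\iota\circ \gamma$ is 1-Lipschitz; the main obstacle is the lower bound, for which I need a twist parameter $\Xi:{\cal A}(F)\to \mathbb{Z}$ which is coarsely Lipschitz and which recovers the power of $T^2$ up to bounded error. This is the direct analog of the map $\Xi$ constructed in the last part of Proposition \ref{calculate2}, with a collar neighborhood of $\partial F$ in $F$ playing the role of the once punctured annulus $B$. The key input is the intersection count of Lemma \ref{injective}: $\Upsilon(T^k\alpha)$ and $\Upsilon(T^\ell\alpha)$ have exactly $|k-\ell|-1$ essential intersection circles, so the relative twist within a $\phi$-fibre is coarsely determined by the intersection pattern of an arc with the fixed base system $\iota({\cal A}(F_0))$. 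Once $\Xi$ is in place, the pair $(\phi,\Xi):{\cal A}(F)\to {\cal A}(F_0)\times \mathbb{Z}$ is coarsely Lipschitz and coarsely inverts the candidate map, yielding the required QI $\mathbb{Z}^2$ in ${\cal A}(F)$ and hence a QI $\mathbb{R}^2$ in ${\cal S\cal G}$ via $\Upsilon$.
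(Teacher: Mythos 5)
Your first two steps track the paper closely. For $\Theta\circ\Upsilon=\mathrm{id}$, the paper simply observes that $\Upsilon(\alpha)$ meets $F$ in the unique arc $\alpha$; your argument via $A$-tight minimal position is consistent with that. For the coarse Lipschitz bound, the paper declares the argument identical to Proposition 4.18 of \cite{HH15} and omits it; your sketch establishes that spheres belonging to a common simple sphere system $\Sigma$ receive disjoint arcs, but the real content of that cited proposition is the coarse independence of $\Theta(\sigma)$ from the choice of simple system containing $\sigma$ (and from the choice among the essential intersection arcs). You gesture at this with ``absorbs into a bounded additive constant'' without justifying it; this is precisely what Lemma \ref{minimal2} and the $A$-tight minimal position machinery are set up to deliver, so it deserves more than a parenthetical.

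Where you genuinely diverge from the paper is the last step. After reducing to the statement that ${\cal A}(F)$ contains a quasi-isometrically embedded $\mathbb{Z}^2$, the paper does not redo any work: it observes that $\partial F$ is an $I$-bundle generator in the trivial $I$-bundle $H$ over $F$, that $\alpha\mapsto(\text{$I$-bundle over }\alpha)$ identifies ${\cal A}(F)$ simplicially with the subgraph $\Omega$ of Lemma \ref{firststep}, and then cites Lemma \ref{firststep} to conclude ${\cal A}(F)$ is quasi-isometric to ${\cal A}(F_0)\times\mathbb{Z}$. You instead re-derive the quasi-isometry from scratch, building a section $\iota:{\cal A}(F_0)\to{\cal A}(F)$ and a twist parameter $\Xi:{\cal A}(F)\to\mathbb{Z}$ analogous to the one in Proposition \ref{calculate2}. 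The approach can be made to work, but as written it leaves two nontrivial gaps. First, the ``upper Lipschitz bound'' for $(n,k)\mapsto T^{2k}(\iota(\gamma(n)))$ is not immediate: you need that $d_{{\cal A}(F)}(\alpha,T^{2}\alpha)$ is uniformly bounded for all $\alpha$, which is the analogue of claim (3) in the proof of Lemma \ref{firststep} and is not a consequence merely of $T^{2}$ being an isometry. Second, the construction of $\Xi$ is only named, not built; the normal form used for $\Xi$ in Proposition \ref{calculate2} lives on a once-punctured annulus $B$ bounded by two curves $c_1,c_2$, and transporting that to a collar of $\partial F$ with a boundary marked point requires a genuine argument, not just analogy. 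Given that both of these are already packaged in Lemma \ref{firststep} once one accepts the simplicial identification of ${\cal A}(F)$ with $\Omega$, the paper's reduction is both shorter and sharper, and you should either carry it out or supply the missing details of $\Xi$ and the $T^2$-displacement bound.
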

\begin{proof} Given the above discussion, the proof that 
$\Theta$ is a coarsely Lipschitz map is identical to the proof that the map $\Theta_0$ is 
a coarsely Lipschitz map in Proposition 4.18 of \cite{HH15} and will be omitted. Moreover, as for $\alpha
\in {\cal A}(F)$, the sphere 
$\Upsilon(\alpha)$ intersects $F$ in the unique arc $\alpha$, we have 
$\Theta(\Upsilon(\alpha))=\alpha$.

As a consequence, $\Theta\vert \Upsilon({\cal A}(F))$ is a Lipschitz bijection, 
with inverse $\Upsilon$. Then 
the subgraph $\Upsilon({\cal A}(F))$ of ${\cal S\cal G}$ is bilipschitz equivalent
to ${\cal A}(F)$. Furthermore, the map $\Upsilon\circ \Theta$ 
is a Lipschitz retraction of ${\cal S\cal G}$ onto $\Upsilon({\cal A}(F))$. 
Then $\Upsilon({\cal A}(F))$ is a quasi-isometrically embedded subgraph of
${\cal S\cal G}$ which is moreover quasi-isometric to ${\cal A}(F)$.

Let as before $F_0$ be the surface obtained from $F$ by removing the spot. 
We are left with showing that ${\cal A}(F)$ is quasi-isometric to 
${\cal A}(F_0)\times \mathbb{Z}$. However, this was shown in Lemma \ref{firststep}.
Namely, in the terminology used before, the boundary $\partial F$ is 
an $I$-bundle generator in the trivial interval bundle $H$ over $F$, and 
associating to an arc $\alpha$ the $I$-bundle over $\alpha$ defines an isomorphism
of ${\cal A}(F)$ with the subgraph $\Omega$ of the disk graph of $H$ used in 
Lemma \ref{firststep}. The statement now follows from Lemma \ref{firststep}.
\end{proof}

\begin{remark}
 Most likely Proposition \ref{next1} holds true as well in
the case that $g=2n+1$ is odd, and furthermore this 
can  be deduced with the above argument 
using non-orientable surfaces. However, 
the analogue of Proposition 4.18 of 
\cite{HH15} for non-orientable surfaces is not available, and we
leave the verification of these claims to other authors.
\end{remark}

\bigskip\bigskip

\noindent
MATH. INSTITUT DER UNIVERSIT\"AT BONN, ENDENICHER ALLEE 60, 
53115 BONN, GERMANY\\
\bigskip\noindent
e-mail: ursula@math.uni-bonn.de


\begin{thebibliography}{HiHo12}


\bibitem[BHS17]{BHS17} J. Behrstock, M. Hagen, 
A. Sisto, {\em Hierarchically hyperbolic spaces I: 
Curve complexes for cubical groups}, 
Geometry \&  Topology 21 (2017), 1731--1804. 












\bibitem[BV12]{BV12} M.~Bridson, K.~Vogtmann,
{\em The Dehn function of ${\rm Out}(F_n)$ and 
${\rm Aut}(F_n)$}, 
Ann. Inst. Fourier 62 (2012), 1811-1817.




















\bibitem[H16]{H16} U. Hamenst\"adt,
{\em Hyperbolic relatively hyperbolic graphs and
disk graphs}, Groups, Geom. Dyn. 10 (2016), 1--41.


\bibitem[H19]{H19a} U.~Hamenst\"adt, 
{\em Asymptotic dimension and the disk graph I}, 
J. Topology 12 (2019), 658--673.

\bibitem[H12]{H12} U.~Hamenst\"adt, {\em Spotted disk and
    sphere graphs II}, preprint 2012, revised 2020.







\bibitem[HH15]{HH15} U.~Hamenst\"adt, 
S.~Hensel, {\em Spheres and projections for ${\rm Out}(F_n)$},
J. Topol. 8 (2015), 65--92.

\bibitem[HH19]{HH19} U.~Hamenst\"adt, S.~Hensel,
{\em Geometry of the handlebody group II: Dehn functions}, 
Michigan Math. J. 70 (2021), 23--53.



\bibitem[HM13a]{HM10} M.~Handel, L.~Mosher,
{\em Lipschitz retraction and distortion for
${\rm Out}(F_n)$}, Geometry \& Topology 17 (2013), 1535--1580.

\bibitem[HM13b]{HM13} M.~Handel, L.~Mosher,
{\em The free splitting complex of a free group I:
Hyperbolicity}, Geometry \& Topology 17 (2013), 1581--1670.



















\bibitem[KLS09]{KLS09} R.~Kent, C.~Leininger, 
S.~Schleimer, {\em Trees and mapping class groups}, 
J. reine angew. Math. 637 (2009), 1--21.



 


\bibitem[L74]{L74} F.~Laudenbach, {\em Topologie en la dimension trois: Homotopie et
isotopie}, 
Asterisque 12, 1974. 



\bibitem[MM99]{MM99} H.~Masur, Y.~Minsky, 
{\em Geometry of the complex of curves I: Hyperbolicity},
Invent. Math. 138 (1999), 103--149.

\bibitem[MM00]{MM00} H.~Masur, Y.~Minsky, 
{\em Geometry of the complex of curves II: Hierachical structure},
Geom. Funct. Anal. 10 (2000), 902--974.




\bibitem[MS13]{MS10} H.~Masur, S.~Schleimer, 
{\em The geometry of the disk complex}, 
J. Amer. Math. Soc. 26 (2013), 1--62.





\bibitem[Mo95]{Mo95} L. Mosher, {\em Mapping class groups are automatic}, 
Ann. of Math.(2)  142 (1995), 303--384.












\bibitem[S77]{S77} S. Suzuki, {\em On homeomorphisms of a 3-dimensional
handlebody}, Canad. J. Math. 29 (1977), 111-124.



\bibitem[Wa98]{Wa98} B. Wajnryb, {\em Mapping class group of a handlebody},
Fund. Math. 158 (1998), 195--228. 


\end{thebibliography}
\end{document}